\numberwithin{equation}{section}
\newtheorem{theorem}{Theorem}[section]
\newtheorem{acknowledgement}[theorem]{Acknowledgment}
\newtheorem{definition}[theorem]{Definition}
\newtheorem{lemma}[theorem]{Lemma}
\newtheorem{proposition}[theorem]{Proposition}
\newtheorem{remark}[theorem]{Remark}
\newtheorem{hypothesis}{Hypothesis}[section]
\begin{document}

\title{Stochastic partial differential equations\\
 driven by space-time fractional noises}

\author{Ying Hu \\
 %EndAName
IRMAR, Université Rennes 1\\
 35042 Rennes Cedex, France\\
 Email: Ying.Hu@univ-rennes1.fr\medskip{}
\\
 Yiming Jiang\\
 School of Mathematical Sciences, Nankai University\\
 Tianjin 300071, China\\
 Email: ymjiangnk@nankai.edu.cn \medskip{}
\\
 Zhongmin Qian\\
 Mathematical Institute, University of Oxford\\
 Oxford OX2 6GG, England\\
 Email: Zhongmin.Qian@maths.ox.ac.uk}

\maketitle

\begin{abstract}
In this paper, we study a class of stochastic partial differential
equations (SPDEs) driven by space-time fractional noises. Our method
consists in studying first the nonlocal SPDEs and showing then the
convergence of the family of these equations and the limit gives the
solution to the SPDE.
\end{abstract}
%%%%%%%%%%%%%%%%%%%%%%%%%%%%%%%%%%%%%%%%%%%%%%%%%%%%%%%%%%%%%%%%%%%%%%%%%%

\noindent \textit{Key Words:} Stochastic Partial Differential Equation,
Space-time Fractional Noise.

\noindent \textit{MSC:} 60H15

\section{Introduction}

Recently, stochastic partial differential equations are studied mainly
as alternative physical models for some complex and chaotic natural
phenomena. For example, in the study of turbulent phenomena, a reduction
must be made although the Navier-Stokes equations are believed to
catch the motions of all different sorts of flows of incompressible
fluids. It is thus hopeless, at least under the current technologies
and the current computational power (it might be changed with the
arrival of quantum computers which is still in the remote future),
to understand the solutions to the Navier-Stokes equations subject
to complicated boundary conditions with precision and mathematical
rigor. One of the ideas in the fluid dynamics is to combine the equations
of motions with statistical ideas. Statistical fluid mechanics has
been the main tool for the understanding of the turbulent flows. Traditional
statistical fluid mechanics is based on the hypothesis (which has
not been proved yet) that there is an underlying invariant measure
with respect to the non-linear semigroup defined by the Navier-Stokes
equations, and is not based on the use of stochastic evolution models.
It$\hat{o}$'s calculus and its generalizations to infinite dimensional
state spaces such as Malliavin Calculus etc., on the other hand, provide
the possibility to construct useful stochastic evolution models directly.
A class of simple models can be constructed by simplifying the equations
of motions and enhanced by adding suitable noise terms in order to
recover essential features in the original physical laws. For example,
for the equation of motion for an incompressible fluid:
\[
\frac{\partial u}{\partial t}+u\cdot\nabla u=\nu\Delta u-\nabla p\textrm{ and }\nabla\cdot p=0
\]
where $u$ describes the velocity of the flow and $p$ is the pressure.
In order to apply the familiar theory of parabolic equations, a simple
way to make reduction is to drop the pressure term $\nabla p$ from
the first equation, so that the Navier-Stokes equations become a parabolic
system
\[
\frac{\partial u}{\partial t}+u\cdot\nabla u=\nu\Delta u
\]
which preserves the non-linear convection, but certainly many interesting
features are lost. To recover the chaotic nature of the fluids, we
may add a noise term to the parabolic equation, which thus leads to
the following stochastic partial differential equations
\[
\frac{\partial u}{\partial t}+u\cdot\nabla u=\nu\Delta u+W
\]
where $W$ should be modeled by a space-time random field, and $W$
can be considered as a kind of radom perturbations, or as external
random force applied to the fluid in question. This kind of stochastic
partial differential equations has received study in the recent years,
and a lot of interesting results have been obtained. While, there
is no certain rules which dictate the choice of a noise term, and
the choice of a reasonable stochastic process really depends on the
equation of motion in question, by taking into account of the physical
meaning as far as possible.

In this paper, we study the following stochastic partial differential
equation (SPDE):

\begin{equation}
\left\{ \begin{array}{ll}
\frac{\partial u}{\partial t}(t,x)=\frac{1}{2}\Delta u(t,x)+g(u(t,x),\frac{\partial}{\partial x}u(t,x))+\dot{B}^{H}\text{,}\\
u(t,0)=0\text{,}\\
u(0,x)=u_{0}(x)\text{,}
\end{array}\right.\label{eq:intro}
\end{equation}
where $(t,x)\in\lbrack0,T]\times D$, $D=[0,\infty)$, and
\[
\dot{B}^{H}=B^{H}(dt,dx)=\frac{\partial^{2}B^{H}(t,x)}{\partial t\partial x}
\]
is a space-time fractional noise with Hurst parameter $H=(h_{1},h_{2})$,
$h_{i}\in(0,1)$ for $i=1,2$, see below for a precise definition.

SPDEs driven by Gaussian noises have been widely studied where the
non-linear term $g$ depends only on $u$, see for example Walsh \cite{Wa},
Da Prato-Zabczyk \cite{DZ}, Hu et al. \cite{HNS} and Fuhrman et
al. \cite{FHT}. These theories and their applications are now classic
and mature.

If $g$ has a particular form depending on $u$ and $\frac{\partial}{\partial x}u$
as well, such as the non-linear term in the Burgers equation, the
SPDE has been considered by many authors. Mohammed and Zhang \cite{Zhang12}
studied the dynamics of the stochastic Burgers equation on the unit
interval driven by affine linear noise. Using multiplicative ergodic
theory techniques, they established the existence of a discrete non-random
Lyapunov spectrum for the cocycle. They also proved an existence theorem
for solutions of the stochastic Burgers equation on the unit interval
subject to the Dirichlet boundary condition and the anticipating initial
velocities in \cite{Zhang13}. Wang et al. \cite{Wang} proposed an
$L^{2}$-gradient estimate for the corresponding Galerkin approximations,
and the $\log$-Harnack inequality was established for the semigroup
associated to a class of stochastic Burgers equations. Hairer and
Voss \cite{HV} discussed the numerical methods of various finite-difference
approximations to the stochastic Burgers equation.

Recently, a class of special Gaussian processes called fractional
Brownian motion (fBm) has been attracted attention due to their useful
feature of preserving long term memory, and a large number of interesting
results from scaling invariance to the description of their laws as
random fields have been established by various authors. The study
of these Gaussian processes has its historical motivation from their
applications in hydrology and telecommunication, and have been applied
to the mathematical finance, biotechnology and biophysics, see for
example \cite{OTHB,Gu,Kou} and the literature therein. Coutin and
Qian \cite{CQ}, Mandelbrot and Van Ness \cite{MV} and some other
authors have proposed a theory of stochastic calculus for a class
of continuous stochastic processes with long time memory, including
the fractional Brownian motions as arachtypical examples. Neuenkirch
and Tindel \cite{Neuenkirch} studied the least square-type estimator
for an unknown parameter in the drift coefficient of a stochastic
differential equation with additive fractional noise modeled by a
fractional Brownian motion with Hurst parameter $H>1/2$. Balan \cite{Balan}
identified necessary and sufficient conditions for the existence of
a random field solution for some linear stochastic partial differential
equations of parabolic and hyperbolic type. While, Bo et al. \cite{BJW2}
considered stochastic Cahn-Hilliard equations with fractional noises,
the existence, uniqueness and regularity of the solutions were obtained.
In \cite{JWZ} the stochastic Burgers equation driven by the fractional
noise was studied, a global mild solution was obtained and the existence
of a distribution density of the solution was also established.

The goal of this paper is to study SPDE (\ref{eq:intro}) where $g$
is a function depending on both $u$ and $\frac{\partial}{\partial x}u$,
and where
\[
\dot{B}^{H}=B^{H}(dt,dx)=\frac{\partial^{2}B^{H}(t,x)}{\partial t\partial x}
\]
is a space-time fractional noise. We study the existence and uniqueness
of solution to this class of SPDEs, and the regularity of its solution.

There are two key steps in the present approach. The first step is
to study the following non-local SPDE:
\begin{equation}
\left\{ \begin{array}{ll}
\frac{\partial u}{\partial t}(t,x)=\frac{1}{2}\Delta u(t,x)+g(u(t,x),u^{\theta}(t,x))+\dot{B}^{H}\text{,}\\
u^{\theta}(t,x)=\frac{1}{\theta}\left(u(t,x+\theta)-u(t,x)\right)\text{,}\\
u(t,0)=0\text{,}\\
u(0,x)=u_{0}(x)\text{,}
\end{array}\right.\label{eq:intro2}
\end{equation}
where $\theta\in\Bbb{R}$\textbf{, }$\theta\neq0$, is a parameter.
Note that $u^{\theta}$ is subject to the same boundary condition
as that of $u$. For each $\theta\neq0$, the unique solution to the
SPDE above depends on $\theta$, and thus is denoted by $u(t,x,\theta)$.
In the second step, we show that the family of solutions $u(t,x,\theta)$
converges in an appropriate function space to a limit $u(t,x)$ as
$\theta\rightarrow0$, which provides the solution of SPDE (\ref{eq:intro}).
A space-time fractional noise is a two-parameter Gaussian random field
which can be defined similarly as in one parameter case, and can be
specified in terms of its covariance function. A few definitions about
the stochastic integration theory for such space-time fractional noise
will be recalled in the following sections. As matter of fact, the
regularity properties of space-time fractional noises are fully reflected
in the Hurst parameter $H=(h_{1},h_{2})$, and our main result of
this paper can be simply described in terms of two parameters as following:
if $2h_{1}+h_{2}>2$ (which thus excludes the case of space-time white
noise for which $h_{1}=h_{2}=\frac{1}{2}$), then SPDE (\ref{eq:intro})
admits a unique solution which has nice regularity.

Hairer and Voss \cite{HV} studied a stochastic partial differential
equation where $g$ has a special form, driven by a space-time white
noise. While, in our setting we allow the non-linear term which depends
on both $u$ \ and its space derivative $\frac{\partial}{\partial x}u$
does not possess a convenient form, thus causes essential difficulty.
While our stochastic equation is driven by a space-time fractional
noises which in some sense alleviates the technical difficulties.

The paper is organized as following. After introducing the stochastic
integration theory for fractional noise and the class of SPDEs in
the next section, we study the well solvability of non-local SPDE,
including existence, uniqueness and regularity of solutions in Section
3. Section 4 is devoted to the existence and uniqueness of the SPDE.
We collect several estimates about Green functions used in the main
text in the Appendix.

Throughout the paper, the generic positive constant $C$ may be different
from line to line.

\section{Preliminaries}

In this part, we first recall a few definitions about the fractional
noise and their stochastic integrals. The technical assumptions which
will be enforced in the present paper are stated clearly, and some
a priori estimates are established.

\subsection{Fractional noise}

A one-dimensional fractional Brownian motion $W^{h}=\{W_{t}^{h},\ t\in\lbrack0,T]\}$
with Hurst parameter $h\in(0,1)$ on $[0,T]$ is a centered Gaussian
process on some probability space $(\Omega,{\mathcal{F}},\Bbb{P})$
with its covariance function given by
\[
\Bbb{E}\left\{ W_{t}^{h}W_{s}^{h}\right\} =\frac{1}{2}\left(t^{2h}+s^{2h}-|t-s|^{2h}\right)\text{.}
\]
The existence of such a Gaussian process and the regularity of its
sample paths are well documented. Other equivalent definitions of
fractional Brownian motion and its analysis may be found in \cite{MV,nualart1}.

Similarly, we may generalize the definition to fractional noises with
two parameters (see also Jiang et al. \cite{JWZ} for further details).

\begin{definition} A one-dimensional double-parameter fractional
Brownian field $B^{H}=\{B^{H}(t,x)$, $(t,x)\in\lbrack0,T]\times D\}$
with Hurst parameter $H=(h_{1},h_{2})$ for $h_{i}\in(0,1)$ and $i\in\{1,2\}$,
where $D=(0,\infty)$, is a centered Gaussian field defined on some
probability space $(\Omega,{\mathcal{F}},\Bbb{P})$ with covariance
\begin{eqnarray}
\Bbb{E}\left\{ B^{H}(t,x)B^{H}(s,y)\right\}  & = & \frac{1}{4}\left(t^{2h_{1}}+s^{2h_{1}}-|t-s|^{2h_{1}}\right)\nonumber \\
 &  & \times\left(x^{2h_{2}}+y^{2h_{2}}-|x-y|^{2h_{2}}\right)\nonumber \\
:= &  & R(t,s;x,y)\label{q1}
\end{eqnarray}
for all $t,s\in\lbrack0,T]$ and $x,y\in D$. \end{definition}

Let $\mathcal{E}$ denote the collection of all step functions defined
on $[0,T]\times D$ and $L_{H}^{2}$ denote the Hilbert space of the
closure of $\mathcal{E}$ under scalar product
\[
\langle I_{[0,t]\times\lbrack0,x]},I_{[0,s]\times\lbrack0,y]}\rangle_{L_{H}^{2}}=R(t,s;x,y)\text{.}
\]
Then the mapping $I_{[0,t]\times\lbrack0,x]}\rightarrow B^{H}(t,x)$
can be extended to an isometry between $L_{H}^{2}$ and the Gaussian
space $\mathcal{H}$ associated with $B^{H}$.

\begin{remark} In this paper we only consider the one-dimensional
double-parameter fractional Brownian field with Hurst parameter $H=(h_{1},h_{2})$,
where $h_{i}\in(\frac{1}{2},1)$, $i=1,2$. \end{remark}

Introduce the square integrable kernel
\[
K_{H}(t,s;x,y)=c_{H}s^{\frac{1}{2}-h_{1}}y^{\frac{1}{2}-h_{2}}\int_{s}^{t}\int_{y}^{x}(u-s)^{h_{1}-\frac{3}{2}}u^{h_{1}-\frac{1}{2}}(z-y)^{h_{2}-\frac{3}{2}}z^{h_{2}-\frac{1}{2}}dzdu
\]
and its derivative
\[
\frac{\partial^{2}}{\partial t\partial x}K_{H}(t,s;x,y)=c_{H}(t-s)^{h_{1}-\frac{3}{2}}\left(\frac{t}{s}\right)^{h_{1}-\frac{1}{2}}(x-y)^{h_{2}-\frac{3}{2}}\left(\frac{x}{y}\right)^{h_{2}-\frac{1}{2}}\text{.}
\]
Define the operator $K_{H}^{\ast}$ from $\mathcal{E}$ to $L^{2}([0,T]\times D)$
by
\[
(K_{H}^{\ast}\phi)(s,y)=\int_{s}^{T}\int_{y}^{\infty}\phi(t,x)\frac{\partial^{2}}{\partial t\partial x}K_{H}(t,s;x,y)dtdx.
\]
It is easy to check that
\[
(K_{H}^{\ast}I_{[0,t]\times\lbrack0,x]})(s,y)=K_{H}(t,s;x,y)I_{[0,t]\times\lbrack0,x]}(s,y),
\]
and
\begin{eqnarray*}
\langle K_{H}^{\ast}I_{[0,t]\times\lbrack0,x]},K_{H}^{\ast}I_{[0,s]\times\lbrack0,y]}\rangle_{L^{2}([0,T]\times D)} & = & R_{H}(t,s;x,y)\\
 & = & \langle I_{[0,t]\times\lbrack0,x]},I_{[0,s]\times\lbrack0,y]}\rangle_{L_{H}^{2}}\text{.}
\end{eqnarray*}
Hence, the operator $K_{H}^{\ast}$ is an isometry between $\mathcal{E}$
and $L^{2}([0,T]\times D)$ which can be extended to $L_{H}^{2}$.
By definition
\[
B(t,x)=B^{H}\left({K_{H}^{\ast}}^{-1}(I_{[0,t]\times\lbrack0,x]})\right),\quad(t,x)\in\lbrack0,T]\times D,
\]
is a Brownian sheet, and in turn the fractional noise has a representation
\[
B^{H}(t,x)=\int_{0}^{t}\int_{0}^{x}K_{H}(t,s;x,y)B(ds,dy)\text{.}
\]

The following embedding property enables us to define the integral
for $\phi\in L_{H}^{2}$ with respect to $B^{H}$.

\begin{proposition} \label{embedding prop} For $h>1/2$, $L^{2}([0,T]\times D)\subset L^{\frac{1}{h}}\left([0,T]\times D\right)\subset L_{H}^{2}$.
\end{proposition}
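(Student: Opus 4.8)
\noindent\emph{Proof strategy.} The plan is to establish the two inclusions separately. The first, $L^{2}\subset L^{1/h}$, is nothing more than H\"older's inequality on a set of finite measure; the second, $L^{1/h}\subset L_{H}^{2}$, I would obtain by rewriting the inner product of $L_{H}^{2}$ as a fractional-integral bilinear form and then invoking the Hardy--Littlewood--Sobolev inequality, finishing with a density argument.

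For the first inclusion, note that $h>1/2$ gives $1/h<2$, so on a rectangle $[0,T]\times[0,N]$ of finite Lebesgue measure H\"older's inequality yields $\|\phi\|_{L^{1/h}}\le C_{T,N}\|\phi\|_{L^{2}}$; since the functions occurring in the sequel are supported in such a bounded spatial region (equivalently, one may take $D$ bounded here), this gives $L^{2}([0,T]\times D)\subset L^{1/h}([0,T]\times D)$.

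For the second inclusion, I would first use that, for $h_{i}>1/2$, the one-parameter fractional covariance factorizes as
\[
\tfrac{1}{2}\bigl(t^{2h_{i}}+s^{2h_{i}}-|t-s|^{2h_{i}}\bigr)=\alpha_{h_{i}}\int_{0}^{t}\!\!\int_{0}^{s}|u-v|^{2h_{i}-2}\,du\,dv,\qquad \alpha_{h_{i}}=h_{i}(2h_{i}-1)>0 .
\]
Since $R(t,s;x,y)$ is the product of the two one-parameter covariances, the bilinear form $\langle\cdot,\cdot\rangle_{L_{H}^{2}}$ has, on step functions, the representation
\[
\langle\phi,\psi\rangle_{L_{H}^{2}}=\alpha_{h_{1}}\alpha_{h_{2}}\int_{0}^{T}\!\!\int_{0}^{T}\!\!\int_{D}\!\!\int_{D}\phi(u,z)\,\psi(v,w)\,|u-v|^{2h_{1}-2}|z-w|^{2h_{2}-2}\,du\,dv\,dz\,dw,
\]
so that $\|\phi\|_{L_{H}^{2}}^{2}$ is bounded in absolute value by the same quadruple integral with $|\phi|$ in place of $\phi$ and $\psi$. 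I would then apply the Hardy--Littlewood--Sobolev inequality pair of variables by pair of variables: in the time variables the kernel $|u-v|^{-(2-2h_{1})}$ satisfies the scaling condition of HLS precisely with the Lebesgue exponent $1/h_{1}$ (one has $h_{1}+h_{1}+(2-2h_{1})=2$, and $1/h_{1}>1$ because $h_{1}<1$), and similarly in the space variables with exponent $1/h_{2}$. Carrying out HLS first in $(u,v)$ and then in $(z,w)$, and combining the two estimates via Minkowski's integral inequality (which, if one insists on a single exponent, forces the symmetric choice $h=h_{1}\wedge h_{2}$; otherwise one keeps the anisotropic norm $L^{1/h_{2}}_{z}L^{1/h_{1}}_{u}$), I would arrive at
\[
\|\phi\|_{L_{H}^{2}}^{2}\le C\,\|\phi\|_{L^{1/h}([0,T]\times D)}^{2}
\]
for every step function $\phi$. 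As step functions are dense in $L^{1/h}$ and $\mathcal{E}$ is dense in $L_{H}^{2}$ by construction, the bound extends by completion to all of $L^{1/h}$, which yields the claimed continuous embedding. (An essentially equivalent route would be to show directly that the operator $K_{H}^{\ast}$ maps $L^{1/h}$ boundedly into $L^{2}([0,T]\times D)$, estimating the integral operator with kernel $\partial^{2}_{tx}K_{H}$ by a Hardy-type inequality.)

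The step I expect to be the main obstacle is precisely the Hardy--Littlewood--Sobolev estimate together with the attendant bookkeeping: one must apply HLS separately in each pair of variables, control the resulting mixed $L^{p}$-norm by Minkowski's inequality in order to collapse it to a genuine $L^{1/h}$-norm, and exercise a little care with the unbounded spatial variable --- there the kernel $|z-w|^{2h_{2}-2}$ is integrable at infinity and so harmless away from the diagonal, but it is still singular on the diagonal and must be handled by the HLS bound. By contrast, the first inclusion and the final closure passage from $\mathcal{E}$ to $L^{1/h}$ are routine, though the latter is worth spelling out since the inner product on $L_{H}^{2}$ is defined only through completion of $\mathcal{E}$.
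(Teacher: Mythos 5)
Your proof is correct and follows essentially the same route as the paper: the second inclusion rests on the boundedness of the fractional bilinear form $\int\int f(u)g(v)|u-v|^{2h-2}\,du\,dv$ on $L^{1/h}$, which the paper simply quotes as the one-dimensional embedding lemma of M\'emin--Mishura--Valkeila (Lemma \ref{embedding theorem}) and you reprove via the Hardy--Littlewood--Sobolev inequality, iterated in the time and space variables exactly as the paper does implicitly in Lemma \ref{estimation on fractionl integral}. Your caveat that the first inclusion $L^{2}\subset L^{1/h}$ fails on the unbounded domain $D=[0,\infty)$ unless one restricts to bounded spatial support (or to the Gaussian-decaying kernels actually used later) is a genuine point that the paper passes over in silence, and your remark that the two-exponent case $h_{1}\neq h_{2}$ really produces the anisotropic norm $L^{1/h_{1}}_{t}L^{1/h_{2}}_{x}$ matches what the paper in fact uses.
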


The integral $\int_{0}^{t}\int_{0}^{x}\phi(s,y)B^{H}(ds,dy)$ is defined
by
\begin{eqnarray}
\int_{0}^{t}\int_{0}^{x}\phi(s,y)B^{H}(ds,dy)=\int_{0}^{t}\int_{0}^{x}(K_{H}^{*}\phi)(s,y)B(ds,dy).\label{fractional integral}
\end{eqnarray}

For $0\leq s<t\leq T$ and $x,y\in D$ define
\[
\Psi_{h}(t,s,x,y):=4h_{1}h_{2}(2h_{1}-1)(2h_{2}-1)|t-s|^{2h_{1}-2}|x-y|^{2h_{2}-2}\text{.}
\]
A routine calculation shows the equivalence of the stochastic integrals
defined in Jiang et al. \cite{JWZ} and those in this section for
functions in $L_{H}^{2}$.

\begin{proposition} For $f,g\in L_{H}^{2}$ we have
\[
\Bbb{E}\int_{0}^{t}{\int_{D}{f(s,x)B^{H}(dx,ds)}}=0
\]
and
\begin{eqnarray*}
 &  & \Bbb{E}\left\{ \int_{0}^{t}{\int_{D}{f(s,x)B^{H}(dx,ds)}}\int_{0}^{t}{\int_{D}{g(s,x)B^{H}(dx,ds)}}\right\} \\
 & = & \int_{[0,t]^{2}}{\int_{D^{2}}{\Psi_{h}(u,v,x,y)f(u,x)g(v,y)dydxdvdu}}\text{.}
\end{eqnarray*}
\end{proposition}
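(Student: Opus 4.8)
The plan is to reduce both assertions, via the representation \eqref{fractional integral}, to standard facts about Wiener integrals against the Brownian sheet $B$ together with the isometry property of $K_H^\ast$. Fix $t\in[0,T]$ and put $\tilde f=fI_{[0,t]\times D}$, $\tilde g=gI_{[0,t]\times D}$; these still lie in $L_H^2$, and by definition $\int_0^t\int_D f(s,x)\,B^H(dx,ds)=\int_{[0,t]\times D}(K_H^\ast\tilde f)(s,y)\,B(ds,dy)$ with $K_H^\ast\tilde f\in L^2([0,T]\times D)$, since $K_H^\ast$ maps $L_H^2$ isometrically into $L^2([0,T]\times D)$. Hence the random variable under consideration is an ordinary Wiener integral of a square-integrable kernel against the Brownian sheet, so it is a centered Gaussian variable; this yields the first identity at once.

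For the second identity I would apply the Wiener isometry for the Brownian sheet and then the isometry of $K_H^\ast$, writing $L^2$ for $L^2([0,T]\times D)$:
\[
\Bbb{E}\Big\{\int_0^t\!\!\int_D f\,B^H(dx,ds)\int_0^t\!\!\int_D g\,B^H(dx,ds)\Big\}=\langle K_H^\ast\tilde f,\,K_H^\ast\tilde g\rangle_{L^2}=\langle\tilde f,\,\tilde g\rangle_{L_H^2}.
\]
It then remains to express $\langle\tilde f,\tilde g\rangle_{L_H^2}$ as the claimed iterated integral of $\Psi_h$. I would verify this first on the generating step functions: for $\tilde f=I_{[0,t]\times[0,x]}$ and $\tilde g=I_{[0,s]\times[0,y]}$ the left-hand side equals $R(t,s;x,y)$ by the definition of $L_H^2$, and since $R$ factorizes as a product of two one-dimensional fractional-Brownian covariances and vanishes whenever any of $t,s,x,y$ is $0$, the fundamental theorem of calculus gives
\[
R(t,s;x,y)=\int_0^t\!\int_0^s\!\int_0^x\!\int_0^y \frac{\partial^4 R}{\partial u\,\partial v\,\partial a\,\partial b}(u,v,a,b)\,db\,da\,dv\,du.
\]
A direct differentiation shows this fourth mixed partial of $R$ is a constant multiple of $|u-v|^{2h_1-2}|a-b|^{2h_2-2}$, the kernel appearing in $\Psi_h$, and keeping track of the constants gives the formula in the statement for these generators. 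Bilinearity extends the identity to all step functions, and since $\mathcal E$ is dense in $L_H^2$ (and, by Proposition~\ref{embedding prop}, the functions in play genuinely belong to $L_H^2$ and the singular integral converges absolutely), a limiting argument delivers the formula for arbitrary $f,g\in L_H^2$.

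The only genuinely delicate point is this last passage to the limit, because $\Psi_h$ is singular on the diagonals $\{u=v\}$ and $\{a=b\}$ (the exponents $2h_i-2$ being negative). One must check that the bilinear form $(f,g)\mapsto\iint\iint\Psi_h fg$ is well defined and continuous for the $L_H^2$-norm, equivalently that it is exactly the completion of the form initially given on $\mathcal E$; this is where the embedding $L^2([0,T]\times D)\subset L^{1/h}([0,T]\times D)\subset L_H^2$ of Proposition~\ref{embedding prop} and the accompanying Hardy--Littlewood--Sobolev-type bounds are used. Everything else --- computing the fourth mixed partial of $R$ explicitly and tracking the multiplicative constants --- is routine.
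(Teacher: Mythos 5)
Your argument is correct, and it supplies precisely the verification the paper leaves implicit: the authors give no proof of this proposition, merely asserting beforehand that "a routine calculation" shows the equivalence with the integrals of Jiang et al., so there is no written proof to compare against. Your route is the standard one and is sound: the representation (\ref{fractional integral}) reduces the integral to a Wiener integral of the deterministic kernel $K_{H}^{\ast}\tilde f\in L^{2}([0,T]\times D)$ against the Brownian sheet, which gives the centering immediately; the Wiener isometry composed with the isometry of $K_{H}^{\ast}$ identifies the covariance with $\langle \tilde f,\tilde g\rangle_{L_{H}^{2}}$; and that inner product is identified with the $\Psi_{h}$-bilinear form first on indicators via the mixed fourth derivative of $R$, then on $\mathcal{E}$ by bilinearity, and finally on $L^{1/h}$ by density together with Lemma \ref{embedding theorem}, which is exactly the continuity estimate needed to pass the singular kernel to the limit. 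One point that your "keeping track of the constants" step would actually surface: a direct computation gives $\partial^{4}R/\partial u\,\partial v\,\partial a\,\partial b=h_{1}h_{2}(2h_{1}-1)(2h_{2}-1)|u-v|^{2h_{1}-2}|a-b|^{2h_{2}-2}$, which is $\frac{1}{4}\Psi_{h}$ rather than $\Psi_{h}$; with the normalization $\frac{1}{4}$ in the covariance (\ref{q1}), the factor $4$ in the paper's definition of $\Psi_{h}$ makes the stated covariance identity off by a factor of $4$ (presumably inherited from a different normalization in the cited reference). This does not affect your method, and it is harmless downstream since every later use absorbs $\Psi_{h}$ into a generic constant, but the identity as literally stated holds with $\frac{1}{4}\Psi_{h}$ in place of $\Psi_{h}$.
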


In what follows, $\{{\mathcal{F}}_{t},t\in\lbrack0,T]\}$ denotes
the natural filtration generated by the fractional noise $B^{H}$,
that is, ${\mathcal{F}}_{t}$ is the completion of $\sigma\left\{ B^{H}(s,x),s\leq t,x\in D\right\} $,
which thus satisfies the usual conditions.

\begin{remark} The following embedding lemma (see \cite{memin})
yields directly Proposition \ref{embedding prop}. \end{remark}

\begin{lemma} \label{embedding theorem} If $h\in(\frac{1}{2},1)$
and $f,g\in L^{\frac{1}{h}}([a,b])$, then
\[
\int_{a}^{b}\int_{a}^{b}f(u)g(v)|u-v|^{2h-2}dudv\leq C(h)\Vert f\Vert_{L^{\frac{1}{h}}([a,b])}\Vert g\Vert_{L^{\frac{1}{h}}([a,b])}\text{,}
\]
where $C(h)>0$ is a constant depending only on $h$. \end{lemma}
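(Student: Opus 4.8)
The plan is to recognise the left-hand side as a bilinear form attached to the one-dimensional Riesz potential and to invoke the Hardy--Littlewood--Sobolev inequality. First I would extend $f$ and $g$ by zero outside $[a,b]$, so they become elements of $L^{1/h}(\R)$ with unchanged norms. Since $h\in(\tfrac12,1)$, the exponent $\lambda:=2-2h$ lies in $(0,1)$ and $|u-v|^{2h-2}=|u-v|^{-\lambda}$ is, up to a dimensional constant, the kernel of the Riesz potential $I_{1-\lambda}$ on $\R$. Choosing $p=q=1/h$ one checks $\tfrac1p+\tfrac1q+\lambda=2h+(2-2h)=2$ with $1<p,q<\infty$, so the hypotheses of Hardy--Littlewood--Sobolev are met, and it yields directly
\[
\int_{\R}\int_{\R} f(u)g(v)|u-v|^{2h-2}\,du\,dv \le C(h)\,\|f\|_{L^{1/h}(\R)}\,\|g\|_{L^{1/h}(\R)} .
\]
Restricting the integrals back to $[a,b]$ gives the claim. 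Since the inequality is classical and a reference is already cited, quoting Hardy--Littlewood--Sobolev is the short route.

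For a self-contained argument I would instead prove the mapping property that $Tf(v):=\int_a^b f(u)|u-v|^{2h-2}\,du$ is bounded from $L^{1/h}([a,b])$ into $L^{1/(1-h)}([a,b])$; then, because $1/(1-h)$ is the conjugate exponent of $1/h$, Hölder's inequality gives $\iint fg\,|u-v|^{2h-2}=\int g\,Tf\le\|g\|_{1/h}\|Tf\|_{1/(1-h)}$. To obtain the mapping property I would fix $v$ and $\delta>0$ and split $Tf(v)$ into the parts over $\{|u-v|<\delta\}$ and $\{|u-v|\ge\delta\}$: on the inner region a dyadic decomposition of the annuli around $v$, together with $2h-1>0$, bounds the contribution by $C\delta^{2h-1}\mathcal{M}f(v)$, where $\mathcal{M}$ is the Hardy--Littlewood maximal operator; on the outer region Hölder with exponents $1/h$ and $1/(1-h)$ and $\int_{|u-v|\ge\delta}|u-v|^{-2}\,du=2/\delta$ bounds it by $C\delta^{h-1}\|f\|_{1/h}$. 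Optimising in $\delta$ produces the pointwise bound $|Tf(v)|\le C\,\|f\|_{1/h}^{(2h-1)/h}\,\mathcal{M}f(v)^{(1-h)/h}$; raising to the power $1/(1-h)$, integrating over $[a,b]$, and applying the maximal theorem $\|\mathcal{M}f\|_{1/h}\le C\|f\|_{1/h}$ (valid since $1/h>1$) yields $\|Tf\|_{1/(1-h)}\le C(h)\|f\|_{1/h}$, with the exponents conspiring exactly.

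The hard part will be the endpoint nature of the estimate. The naive attempt through Young's convolution inequality, writing $Tf=f*\bigl(|\cdot|^{2h-2}\mathbf{1}_{[-(b-a),\,b-a]}\bigr)$, would require $|\cdot|^{2h-2}\in L^{1/(2-2h)}$, but $(2h-2)\cdot\tfrac{1}{2-2h}=-1$, so the kernel misses that Lebesgue space by precisely the borderline exponent. One must therefore pass through the weak-$L^{1/(2-2h)}$ bound on the kernel --- equivalently, the maximal-function argument above, or the Hardy--Littlewood--Sobolev theorem itself --- and this is the only genuinely delicate point; the remainder is bookkeeping with the exponents $1/h$ and $1/(1-h)$.
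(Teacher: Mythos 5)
Your argument is correct, but it is worth noting that the paper does not actually prove this lemma: it is quoted from M\'emin--Mishura--Valkeila \cite{memin} without proof. Your first route --- extending $f,g$ by zero, identifying $|u-v|^{2h-2}=|u-v|^{-\lambda}$ with $\lambda=2-2h\in(0,1)$ as a Riesz kernel, and checking $\frac1p+\frac1q+\lambda=2$ with $p=q=1/h$ --- is exactly the classical Hardy--Littlewood--Sobolev argument that underlies the cited reference, so in substance you are reconstructing the proof the authors outsource. Your second, self-contained route is a genuine addition: the Hedberg-type pointwise bound $|Tf(v)|\le C\Vert f\Vert_{1/h}^{(2h-1)/h}\,(\mathcal{M}f(v))^{(1-h)/h}$, obtained by balancing the dyadic estimate $C\delta^{2h-1}\mathcal{M}f(v)$ near the diagonal against the H\"older tail bound $C\delta^{h-1}\Vert f\Vert_{1/h}$, does have the exponents conspiring correctly (raising to the power $\frac{1}{1-h}$ and using the maximal theorem at exponent $1/h>1$ gives $\Vert Tf\Vert_{1/(1-h)}\le C\Vert f\Vert_{1/h}$, and duality finishes), and your diagnosis that Young's inequality fails precisely at the endpoint $(2h-2)\cdot\frac{1}{2-2h}=-1$, forcing a weak-type or maximal-function argument, is the right way to see why the estimate is not trivial. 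Two cosmetic points: the inequality should be read with $|f|,|g|$ on the left (or with the left-hand side in absolute value), since for signed $f,g$ the double integral could be negative; and when optimising in $\delta$ one should dispose of the degenerate case $\mathcal{M}f(v)=0$ (then $f=0$ a.e.) and note that taking $\delta>b-a$ only shrinks the outer region, so the bound survives on the bounded interval.
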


\subsection{Several technical estimates}

We are concerned with the following SPDE driven by a space-time fractional
Brownian field:
\begin{equation}
\left\{ \begin{array}{ll}
\frac{\partial u}{\partial t}(t,x)=\frac{1}{2}\Delta u(t,x)+g(u(t,x),\frac{\partial}{\partial x}u(t,x))+\dot{B}^{H}\text{,}\\
u(t,0)=0\text{,}\\
u(0,x)=u_{0}(x)\text{,}
\end{array}\right.\label{EQ}
\end{equation}
for $(t,x)\in\lbrack0,T]\times D$, where
\[
\dot{B}^{H}=B^{H}(dt,dx)=\frac{\partial^{2}B^{H}(t,x)}{\partial t\partial x}
\]
is a fractional Brownian field on $(\Omega,\mathcal{F},\Bbb{P})$
with Hurst parameter $H=(h_{1},h_{2})$ for $h_{i}\in(0,1)$ and $i\in\{1,2\}$.

Throughout the remaining part of the paper, the Hurst parameter $H=(h_{1},h_{2})$
satisfies the following hypothesis $H_{h_{1},h_{2}}$:

\begin{hypothesis} \label{hypothesis 1} (1) $h_{i}\in(\frac{1}{2},1)$,
$i=1,2$, and (2) $2h_{1}+h_{2}>2$. \end{hypothesis}

The initial data $u_{0}:D\mapsto\Bbb{R}$ satisfies the following
hypothesis $H_{u_{0}}$:

\begin{hypothesis} \label{hypothesis 2} (1) $||u_{0}||_{\infty}:=\sup_{x}|u_{0}(x)|<\infty$,\\
(2) $||u_{0}^{\prime}||_{\infty}<\infty$,\\
(3) $u_{0}^{\prime}(x)$ is $\kappa$-Hölder continuous in $x$ with
$\kappa\in(0,1)$. \end{hypothesis}

The function $g:\Bbb{R}\times\Bbb{R}\mapsto\Bbb{R}$ satisfies the
following hypothesis $H_{g}$:

\begin{hypothesis} \label{hypothesis 3} There exists a constant
$L>0$ such that $|g(x_{1},y_{1})-g(x_{2},y_{2})|\leq L(|x_{1}-x_{2}|+|y_{1}-y_{2}|)$.
\end{hypothesis}

Let us consider the following non-local SPDE:
\begin{equation}
\left\{ \begin{array}{ll}
\frac{\partial u}{\partial t}(t,x)=\frac{1}{2}\Delta u(t,x)+g(u(t,x),u^{\theta}(t,x))+\dot{B}^{H},\\
u^{\theta}(t,x)=\frac{1}{\theta}\left(u(t,x+\theta)-u(t,x)\right),\\
u(t,0)=0,\\
u(0,x)=u_{0}(x)
\end{array}\right.\label{EQ u}
\end{equation}
where $\theta\in\Bbb{R}$, $\theta\neq0$, $(t,x)\in\lbrack0,T]\times D$
and $(t,x+\theta)\in\lbrack0,T]\times D$.

%\begin{remark}
%We just check all the character of the solution on (\ref{EQ u}) is uniformly independent of $h$. So that the solution on (\ref{EQ}) is the limit of the solution on (\ref{EQ u}) when $h\rightarrow 0$. We %check it in Section \ref{sec 7}.
%\end{remark}

Suppose $p(t,x,y)$ is the Green function of $\frac{\partial}{\partial t}-\frac{1}{2}\Delta$
in $D$ subject to the Dirichlet boundary condition, that is,
\[
p(t,x,y)=\frac{1}{\sqrt{2\pi t}}\left(e^{-\frac{(x-y)^{2}}{2t}}-e^{-\frac{(x+y)^{2}}{2t}}\right)\text{,}
\]
then we may rewrite Eq. (\ref{EQ u}) as the following
\[
\left\{ \begin{array}{l}
u(t,x)=\int_{D}p(t,x,y)u_{0}(y)dy+\int_{0}^{t}\int_{D}p(t-s,x,y)g(u(s,y),u^{\theta}(s,y))dyds\\
\ \ \ \ \ \ \ \ +\int_{0}^{t}\int_{D}p(t-s,x,y)B^{H}(ds,dy),\\
u^{\theta}(t,x)=\frac{1}{\theta}\left(u(t,x+\theta)-u(t,x)\right).
\end{array}\right.
\]
Note that, from (\ref{fractional integral}),
\[
\int_{0}^{t}\int_{D}p(t-s,x,y)B^{H}(ds,dy)=\int_{0}^{t}\int_{D}(K_{H}^{\ast}p)(t-s,x,y)B(ds,dy)\text{.}
\]

The following lemma provides the key estimate we need in what follows.

\begin{lemma} \label{estimation on integral} Suppose $\psi(t,x)$
is a measurable function, and suppose
\[
\left|H(t,x,y)\right|\leq t^{-\rho}e^{-\frac{C(x-y)^{2}}{t}},
\]
where $\rho<\frac{3}{2}$. Then there exists a constant $C_{T}>0$
such that for $t\in\lbrack0,T]$,
\begin{equation}
\Bbb{E}\left(\int_{0}^{t}\int_{D}H(t-s,x,y)\psi(s,y)dyds\right)^{2}\leq C_{T}\int_{0}^{t}(t-s)^{\frac{1}{2}-\rho}\sup_{y}\Bbb{E}\left(\psi^{2}(s,y)\right)ds\text{.}\label{q2}
\end{equation}
\end{lemma}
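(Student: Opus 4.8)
The plan is to estimate the $L^{2}(\Omega)$-norm of the random double integral directly, reducing the spatial integration to a Gaussian estimate and then closing the argument with a single application of the Cauchy--Schwarz inequality in the time variable; the standing assumption $\rho<\frac{3}{2}$ will enter precisely at the point where one needs $\int_{0}^{t}(t-s)^{\frac{1}{2}-\rho}\,ds<\infty$.

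Write $\Phi(s):=\sup_{y}\Bbb{E}\left(\psi^{2}(s,y)\right)$ (if $\Phi(s)\equiv+\infty$ there is nothing to prove). The first step is to apply Minkowski's integral inequality for the norm $\|\cdot\|_{L^{2}(\Omega)}$ with respect to the measure $dy\,ds$ on $[0,t]\times D$, together with the pointwise bound $\|\psi(s,y)\|_{L^{2}(\Omega)}\le\Phi(s)^{1/2}$:
\[
\left\|\int_{0}^{t}\int_{D}H(t-s,x,y)\psi(s,y)\,dy\,ds\right\|_{L^{2}(\Omega)}\le\int_{0}^{t}\int_{D}\left|H(t-s,x,y)\right|\Phi(s)^{1/2}\,dy\,ds .
\]
(An equivalent route is to expand the square as a fourfold integral over $[0,t]^{2}\times D^{2}$ and bound $\left|\Bbb{E}\left(\psi(s,y)\psi(r,z)\right)\right|\le\Phi(s)^{1/2}\Phi(r)^{1/2}$ by Cauchy--Schwarz, which produces the square of the right-hand side above.)

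Next I would use the hypothesis on $H$ together with the elementary Gaussian bound, uniform in $x$,
\[
\int_{D}e^{-\frac{C(x-y)^{2}}{\tau}}\,dy\le\int_{\Bbb{R}}e^{-\frac{C(x-y)^{2}}{\tau}}\,dy=\sqrt{\pi/C}\;\tau^{1/2},
\]
to obtain $\int_{D}\left|H(t-s,x,y)\right|\,dy\le C_{1}(t-s)^{\frac{1}{2}-\rho}$, and hence
\[
\Bbb{E}\left(\int_{0}^{t}\int_{D}H(t-s,x,y)\psi(s,y)\,dy\,ds\right)^{2}\le C_{1}^{2}\left(\int_{0}^{t}(t-s)^{\frac{1}{2}-\rho}\Phi(s)^{1/2}\,ds\right)^{2}.
\]
Finally, splitting $(t-s)^{\frac{1}{2}-\rho}=(t-s)^{\frac{1}{4}-\frac{\rho}{2}}\cdot(t-s)^{\frac{1}{4}-\frac{\rho}{2}}$ and applying Cauchy--Schwarz in $s$ gives
\[
\left(\int_{0}^{t}(t-s)^{\frac{1}{2}-\rho}\Phi(s)^{1/2}\,ds\right)^{2}\le\left(\int_{0}^{t}(t-s)^{\frac{1}{2}-\rho}\,ds\right)\left(\int_{0}^{t}(t-s)^{\frac{1}{2}-\rho}\Phi(s)\,ds\right),
\]
where the first factor equals $\frac{t^{\frac{3}{2}-\rho}}{\frac{3}{2}-\rho}\le\frac{T^{\frac{3}{2}-\rho}}{\frac{3}{2}-\rho}<\infty$ exactly because $\rho<\frac{3}{2}$. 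Absorbing $C_{1}^{2}$ and this factor into $C_{T}$ yields (\ref{q2}).

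I expect no serious obstacle: the only points that need a little care are that the spatial Gaussian estimate is genuinely uniform in $x$ (true by translating the integration variable), that the joint measurability of $\psi$ suffices to justify Fubini and Minkowski, and that the time exponent satisfies $\frac{1}{2}-\rho>-1$ so that $\int_{0}^{t}(t-s)^{\frac{1}{2}-\rho}\,ds$ converges — which is exactly why the lemma is stated under $\rho<\frac{3}{2}$ and is the sole place that hypothesis is used.
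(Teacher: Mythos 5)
Your proof is correct and rests on the same mechanism as the paper's: a weighted Cauchy--Schwarz argument exploiting the spatial Gaussian bound $\int_{D}|H(t-s,x,y)|\,dy\leq C(t-s)^{\frac{1}{2}-\rho}$ together with the integrability of $(t-s)^{\frac{1}{2}-\rho}$ on $[0,t]$, which is the sole place $\rho<\frac{3}{2}$ is used. The only difference is organizational: the paper applies Cauchy--Schwarz once, with weight $H$ over the product measure on $[0,t]\times D$ (so that $\bigl(\int H\psi\bigr)^{2}\leq\bigl(\int H\bigr)\bigl(\int H\psi^{2}\bigr)$), whereas you first invoke Minkowski's integral inequality in $L^{2}(\Omega)$ and then apply Cauchy--Schwarz in the time variable alone; the two routes are interchangeable, give the identical bound, and your version is, if anything, slightly more careful in writing $|H|$ rather than $H$ and in justifying the uniform-in-$x$ spatial integral.
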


\begin{proof} Applying the Cauchy-Schwarz inequality, we get
\begin{eqnarray*}
 &  & \Bbb{E}\left(\int_{0}^{t}\int_{D}H(t-s,x,y)\psi(s,y)dyds\right)^{2}\\
 & \leq & \left(\int_{0}^{t}\int_{D}H(t-s,x,y)dyds\right)\times\Bbb{E}\left(\int_{0}^{t}\int_{D}H(t-s,x,y)\psi^{2}(s,y)dyds\right)\\
 & \leq & C_{T}\int_{0}^{t}(t-s)^{\frac{1}{2}-\rho}\sup_{y}\Bbb{E}\left(\psi^{2}(s,y)\right)ds,
\end{eqnarray*}
which completes the proof. \end{proof}

\begin{remark} \label{remark on p'}The previous estimate is applicable
to
\[
H(t,x,y)=\frac{1}{\theta}\left(p(t,x+\theta,y)-p(t,x,y)\right)\text{,}
\]
so that
\[
\Bbb{E}\left(\int_{0}^{t}\int_{D}H(t-s,x,y)\psi(s,y)dyds\right)^{2}\leq C_{T}\int_{0}^{t}(t-s)^{-\frac{1}{2}}\sup_{y}\Bbb{E}\left(\psi^{2}(s,y)\right)ds.
\]

In fact,
\begin{eqnarray*}
H(t,x,y) & = & \frac{1}{\theta}\left(p(t,x+\theta,y)-p(t,x,y)\right)\\
 & = & \int_{0}^{1}\frac{\partial}{\partial x}p(t,x+a\theta,y)da\text{,}
\end{eqnarray*}
and (see also Lemma \ref{estimation on P})
\[
\frac{\partial}{\partial x}p(t,x,y)\leq Ct^{-1}e^{-\frac{(x-y)^{2}}{4t}}\text{.}
\]
Thus, by the Fubini Theorem, we deduce that
\begin{eqnarray*}
\int_{0}^{t}\int_{D}H(t-s,x,y)dyds & = & \int_{0}^{t}\int_{D}\int_{0}^{1}\frac{\partial}{\partial x}p(t,x+a\theta,y)dadyds\\
 & = & \int_{0}^{1}\left(\int_{0}^{t}\int_{D}\frac{\partial}{\partial x}p(t,x+a\theta,y)dyds\right)da\\
 & \leq & C\int_{0}^{1}t^{\frac{1}{2}}da\leq C_{T}\text{.}
\end{eqnarray*}
Therefore
\begin{eqnarray*}
 &  & \Bbb{E}\left(\int_{0}^{t}\int_{D}H(t-s,x,y)\psi^{2}(s,y)dyds\right)\\
 & = & \Bbb{E}\left(\int_{0}^{t}\int_{D}\int_{0}^{1}\frac{\partial}{\partial x}p(t,x+a\theta,y)\psi^{2}(s,y)dadyds\right)\\
 & = & \Bbb{E}\left(\int_{0}^{1}\left(\int_{0}^{t}\int_{D}\frac{\partial}{\partial x}p(t,x+a\theta,y)\psi^{2}(s,y)dyds\right)da\right)\\
 & \leq & C_{T}\int_{0}^{t}(t-s)^{-\frac{1}{2}}\sup_{y}\Bbb{E}\left(\psi^{2}(s,y)\right)ds\text{.}
\end{eqnarray*}
\hfill{}\\
\end{remark}

We also need an estimate on the second moment of some stochastic integrals.

\begin{lemma} \label{estimation on fractionl integral} Suppose $f(t,x)\in L_{H}^{2}$,
then
\begin{equation}
\Bbb{E}\left[\int_{0}^{t}{\int_{D}\!{f(s,x)B^{H}(dx,ds)}}\right]^{2}\leq C(h_{1},h_{2})\left(\int_{0}^{t}(\Vert f(s,\cdot)\Vert_{L^{\frac{1}{h_{2}}}(D)})^{\frac{1}{h_{1}}}ds\right)^{2h_{1}}\text{.}\label{q4}
\end{equation}
\end{lemma}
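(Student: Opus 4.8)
The plan is to start from the explicit second-moment formula for integrals against $B^{H}$ recalled in Section 2.1 and then apply the Hardy--Littlewood--Sobolev-type estimate of Lemma \ref{embedding theorem} twice: once in the space variable and once in the time variable. Since replacing $f$ by $|f|$ leaves the right-hand side of \eqref{q4} unchanged while only enlarging the left-hand side (the integrand in the formula below then becomes pointwise nonnegative), I would assume $f\ge 0$ from the outset; and if the right-hand side of \eqref{q4} is infinite there is nothing to prove, so all integrals below may be assumed finite.

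Taking $g=f$ in the covariance identity for fractional stochastic integrals recalled above and using the definition of $\Psi_{h}$ gives
\[
\Bbb{E}\left[\int_{0}^{t}\!\!\int_{D}f(s,x)B^{H}(dx,ds)\right]^{2}=c_{h_{1},h_{2}}\int_{[0,t]^{2}}\!\!\int_{D^{2}}|u-v|^{2h_{1}-2}|x-y|^{2h_{2}-2}f(u,x)f(v,y)\,dy\,dx\,dv\,du,
\]
where $c_{h_{1},h_{2}}=4h_{1}h_{2}(2h_{1}-1)(2h_{2}-1)>0$. For fixed $u,v\in[0,t]$ I would estimate the inner double integral over $D^{2}$ by Lemma \ref{embedding theorem} with $h=h_{2}$, applied to $x\mapsto f(u,x)$ and $y\mapsto f(v,y)$. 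Although Lemma \ref{embedding theorem} is stated on a bounded interval, its constant $C(h_{2})$ depends only on $h_{2}$ and the exponent satisfies $2h_{2}-2<0$, so exhausting $D=(0,\infty)$ by intervals $[0,R]$ and letting $R\to\infty$ (monotone convergence, the integrand being nonnegative) yields
\[
\int_{D^{2}}|x-y|^{2h_{2}-2}f(u,x)f(v,y)\,dx\,dy\le C(h_{2})\,\varphi(u)\varphi(v),\qquad\varphi(u):=\Vert f(u,\cdot)\Vert_{L^{1/h_{2}}(D)}.
\]
Inserting this into the previous display leaves the left-hand side bounded by $c_{h_{1},h_{2}}C(h_{2})\int_{[0,t]^{2}}|u-v|^{2h_{1}-2}\varphi(u)\varphi(v)\,du\,dv$.

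Finally I would apply Lemma \ref{embedding theorem} once more, this time with $h=h_{1}$ on $[a,b]=[0,t]$ and with the pair of functions $\varphi,\varphi$, obtaining
\[
\int_{[0,t]^{2}}|u-v|^{2h_{1}-2}\varphi(u)\varphi(v)\,du\,dv\le C(h_{1})\Vert\varphi\Vert_{L^{1/h_{1}}([0,t])}^{2}=C(h_{1})\left(\int_{0}^{t}\varphi(u)^{1/h_{1}}\,du\right)^{2h_{1}}.
\]
Chaining the three displays gives exactly \eqref{q4} with $C(h_{1},h_{2})=c_{h_{1},h_{2}}C(h_{1})C(h_{2})$. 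Note that the hypotheses $h_{i}\in(\frac12,1)$ are exactly what makes the exponents admissible for Lemma \ref{embedding theorem} (namely $2-2h_{i}\in(0,1)$ and $1/h_{i}>1$), while the condition $2h_{1}+h_{2}>2$ plays no role here.

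I do not expect any serious obstacle in this argument. The only two points requiring a little care are the passage from a bounded interval to the half-line $D$ in the spatial step (handled above via the interval-independence of the constant in Lemma \ref{embedding theorem} together with the negativity of the exponent and monotone convergence), and carrying out the two applications in the correct order, so that integrating out $x,y$ first produces exactly the product $\varphi(u)\varphi(v)$ to which the temporal estimate then applies directly.
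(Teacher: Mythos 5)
Your proof is correct and follows essentially the same route as the paper: start from the covariance formula, then apply Lemma \ref{embedding theorem} first in the space variable and then in the time variable. The extra care you take with the reduction to $f\ge 0$ and the exhaustion of $D=(0,\infty)$ by bounded intervals is a small refinement of a step the paper leaves implicit, not a different argument.
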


\begin{proof} By Proposition \ref{embedding theorem}, we have
\begin{eqnarray*}
 &  & \Bbb{E}\left[\int_{0}^{t}\!\!{\int_{D}\!{f(s,x)B^{H}(dx,ds)}}\right]^{2}\\
 & = & \int_{[0,t]^{2}}\!\!{\int_{D^{2}}\!\!{\Psi_{h}(s_{1},s_{2},y_{1},y_{2})f(s_{1},y_{1})f(s_{2},y_{2})dy_{1}dy_{2}ds_{1}ds_{2}}}\\
 & = & C(h_{1},h_{2})\int_{[0,t]^{2}}\!\!{\int_{D^{2}}\!\!{|s_{1}-s_{2}|^{2h_{1}-2}|y_{1}-y_{2}|^{2h_{2}-2}f(s_{1},y_{1})f(s_{2},y_{2})dy_{1}dy_{2}ds_{1}ds_{2}}}\\
 & \leq & C(h_{1},h_{2})\int_{[0,t]^{2}}|s_{1}-s_{2}|^{2h_{1}-2}\Vert f(s_{1},\cdot)\Vert_{L^{\frac{1}{h_{2}}}(D)}\Vert f(s_{2},\cdot)\Vert_{L^{\frac{1}{h_{2}}}(D)}ds_{1}ds_{2}\\
 & \leq & C(h_{1},h_{2})\left(\int_{0}^{t}(\Vert f(s,\cdot)\Vert_{L^{\frac{1}{h_{2}}}(D)})^{\frac{1}{h_{1}}}ds\right)^{2h_{1}}\text{,}
\end{eqnarray*}
and the proof of this lemma is complete. \end{proof}

\section{Solvability of non-local SPDE}

This section is devoted to the study of non-local SPDE. We study the
uniqueness, existence and regularity of the solution.

\subsection{Uniqueness}

$\Bbb{S}$ denotes the collection of all functions $u:D\times\lbrack0,T]\rightarrow\Bbb{R}$
such that for every $t\in\lbrack0,T]$,
\[
\sup_{x}\Bbb{E}|u(t,x)|^{2}<\infty.
\]
In fact, for the solution set $\Bbb{S}$: $\sup_{x}\Bbb{E}|u(t,x)|^{2}$
is locally integrable in $t$ in order to apply Henry's Gronwall type
inequality later.

\begin{theorem} Suppose $H_{h_{1},h_{2}}$, $H_{u_{0}}$ and $H_{g}$
hold, then there exists at most one solution $(u(t,x),u^{\theta}(t,x))$
of the SPDE (\ref{EQ u}), where $u(t,x)\in\Bbb{S}$ and $u^{\theta}(t,x)\in\Bbb{S}$.
\end{theorem}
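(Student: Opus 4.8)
The plan is to argue by a Gr\"onwall-type estimate on the difference of two solutions. Suppose $(u_1,u_1^\theta)$ and $(u_2,u_2^\theta)$ are two solutions in $\Bbb{S}\times\Bbb{S}$ with the same initial data $u_0$, and set $w=u_1-u_2$, $w^\theta=u_1^\theta-u_2^\theta=\frac1\theta(w(t,x+\theta)-w(t,x))$. Since both solutions satisfy the same mild formulation, the initial term and the stochastic convolution term cancel, leaving
\[
w(t,x)=\int_0^t\!\!\int_D p(t-s,x,y)\bigl(g(u_1(s,y),u_1^\theta(s,y))-g(u_2(s,y),u_2^\theta(s,y))\bigr)\,dy\,ds .
\]
By the Lipschitz hypothesis $H_g$, the integrand is bounded by $L(|w(s,y)|+|w^\theta(s,y)|)$, and $|w^\theta(s,y)|\le\frac1\theta(|w(s,y+\theta)|+|w(s,y)|)$, so everything is controlled by a constant times $\sup_y|w(s,y)|$ (with a $\theta$-dependent constant, which is harmless since $\theta\ne0$ is fixed).

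Next I would take the second moment and apply Lemma \ref{estimation on integral} with $H(t,x,y)=p(t,x,y)$, which satisfies the hypothesis of that lemma with $\rho=1/2<3/2$ (this is the standard Gaussian bound on the Dirichlet heat kernel; see Lemma \ref{estimation on P} in the Appendix). This yields
\[
\Bbb{E}\,w(t,x)^2\le C_T\int_0^t(t-s)^{-1/2}\,\sup_y\Bbb{E}\bigl[\bigl(g(u_1(s,y),u_1^\theta(s,y))-g(u_2(s,y),u_2^\theta(s,y))\bigr)^2\bigr]\,ds .
\]
Combining with the Lipschitz bound and the elementary inequality $(a+b+c)^2\le3(a^2+b^2+c^2)$, and then taking the supremum over $x$ on the left, I get
\[
\phi(t):=\sup_x\Bbb{E}\,w(t,x)^2\le C\int_0^t(t-s)^{-1/2}\phi(s)\,ds ,
\]
where $C$ depends on $T$, $L$, and $\theta$. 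Note the supremum passes through because the right-hand bound is already uniform in $x$ (the spatial shifts $y\mapsto y+\theta$ only relabel the variable over which the sup is taken).

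Finally, since $u_1,u_2\in\Bbb{S}$, $\phi$ is finite for each $t$, and — as remarked just before the theorem — locally integrable on $[0,T]$; the singular kernel $(t-s)^{-1/2}$ is integrable, so the singular (Henry's) Gr\"onwall inequality applies and forces $\phi\equiv0$ on $[0,T]$. Hence $w(t,x)=0$ in $L^2$ for every $(t,x)$, i.e. $u_1=u_2$, and then automatically $u_1^\theta=u_2^\theta$. I expect the only genuinely delicate point to be the bookkeeping that makes the estimate uniform in $x$ despite the nonlocal shift term $u^\theta$ — one must make sure that the $\sup_x$ on the left and the implicit $\sup_y$ coming from the nonlocal term are handled consistently, and that $\phi$ is a priori finite so that the Gr\"onwall argument is not vacuous; both are guaranteed by working in the class $\Bbb{S}$. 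Invoking the singular Gr\"onwall lemma itself is routine once $\phi$ is known to be locally integrable.
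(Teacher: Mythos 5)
Your argument is correct, and it reaches the conclusion by a slightly more elementary route than the paper. Both proofs start identically: subtract the two mild formulations so the initial term and the stochastic convolution cancel, apply the Lipschitz hypothesis $H_g$, and close with the singular Gr\"onwall inequality. The difference is in how the nonlocal term is handled. You control $w^\theta$ by the crude pointwise bound $|w^\theta(s,y)|\le\frac{1}{|\theta|}(|w(s,y+\theta)|+|w(s,y)|)$, which lets you run Gr\"onwall on the single quantity $\sup_x\Bbb{E}\,w(t,x)^2$ at the price of a constant blowing up like $\theta^{-2}$. The paper instead derives a separate integral representation for $u^\theta-\tilde u^\theta$ against the kernel difference quotient $\frac{1}{\theta}(p(t-s,x+\theta,y)-p(t-s,x,y))=\int_0^1\partial_x p(t-s,x+a\theta,y)\,da$, estimates it via Remark \ref{remark on p'} with a constant \emph{independent of} $\theta$, and runs a two-component Gr\"onwall on $\Gamma(t)=\sup_x\Bbb{E}|w|^2+\sup_x\Bbb{E}|w^\theta|^2$. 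For the uniqueness statement at fixed $\theta\neq0$ your version is perfectly adequate (and shorter); the paper's $\theta$-uniform bookkeeping is what it reuses later when sending $\theta\to0$. One small inaccuracy: applying Lemma \ref{estimation on integral} to $H=p$ with $\rho=\frac12$ yields the kernel $(t-s)^{1/2-\rho}=1$, not $(t-s)^{-1/2}$; your written bound is still a valid (weaker) majorant on $[0,T]$ and still integrable, so nothing breaks, but the exponent as stated does not match the lemma you cite.
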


\begin{proof} Suppose $(u(t,x),u^{\theta}(t,x))$ and $(\tilde{u}(t,x),\tilde{u}^{\theta}(t,x))$
are two solutions of the equation (2.2), then
\begin{eqnarray*}
 &  & |u(t,x)-\tilde{u}(t,x)|\\
 & = & \left|\int_{0}^{t}\int_{D}p(t-s,x,y)\left[g(u(s,y),u^{\theta}(s,y))-g(\tilde{u}(s,y),\tilde{u}^{\theta}(s,y))\right]dyds\right|\\
 & \leq & C\int_{0}^{t}\int_{D}|p(t-s,x,y)|\left[|u(s,y)-\tilde{u}(s,y)|+|u^{\theta}(s,y)-\tilde{u}^{\theta}(s,y)|\right]dyds\text{,}
\end{eqnarray*}
so that, by Lemma \ref{estimation on integral},
\begin{eqnarray}
\sup_{x}\Bbb{E}|u(t,x)-\tilde{u}(t,x)|^{2} & \leq & C\int_{0}^{t}\sup_{y}\Bbb{E}|u(s,y)-\tilde{u}(s,y)|^{2}ds\nonumber \\
 &  & +C\int_{0}^{t}\sup_{y}\Bbb{E}|u^{\theta}(s,y)-\tilde{u}^{\theta}(s,y)|^{2}ds.\label{q6}
\end{eqnarray}
Since
\begin{eqnarray*}
 &  & u^{\theta}(t,x)-\tilde{u}^{\theta}(t,x)\\
 & = & \int_{0}^{t}\int_{D}\frac{p(t-s,x+\theta,y)-p(t-s,x,y)}{\theta}\\
 &  & \times\left[g(u(s,y),u^{\theta}(s,y))-g(\tilde{u}(s,y),\tilde{u}^{\theta}(s,y))\right]dyds.
\end{eqnarray*}
According to (\ref{q2}), we have
\begin{eqnarray}
 &  & \sup_{x}\Bbb{E}|u^{\theta}(t,x)-\tilde{u}^{\theta}(t,x)|^{2}\nonumber \\
 & \leq & C\int_{0}^{t}(t-s)^{-\frac{1}{2}}\sup_{y}\Bbb{E}|u(s,y)-\tilde{u}(s,y)|^{2}ds\nonumber \\
 &  & +C\int_{0}^{t}(t-s)^{-\frac{1}{2}}\sup_{y}\Bbb{E}|u^{\theta}(s,y)-\tilde{u}^{\theta}(s,y)|^{2}ds.\label{q7}
\end{eqnarray}
Let
\[
\Gamma(t)=\sup_{x}\Bbb{E}|u(t,x)-\tilde{u}(t,x)|^{2}+\sup_{x}\Bbb{E}|u^{\theta}(t,x)-\tilde{u}^{\theta}(t,x)|^{2}\text{.}
\]
Then from (\ref{q6}) and (\ref{q7}), we have
\begin{eqnarray*}
\Gamma(t) & \leq & C\int_{0}^{t}\left(1+(t-s)^{-\frac{1}{2}}\right)\sup_{y}\Bbb{E}|u(s,y)-\tilde{u}(s,y)|^{2}ds\\
 &  & +C\int_{0}^{t}\left(1+(t-s)^{-\frac{1}{2}}\right)\sup_{y}\Bbb{E}|u^{\theta}(s,y)-\tilde{u}^{\theta}(s,y)|^{2}ds\\
 & = & C\int_{0}^{t}\left(1+(t-s)^{-\frac{1}{2}}\right)\Gamma(s)ds,
\end{eqnarray*}
thus it follows from the Gronwall inequality (see e.g. Lemma 1.1 in
\cite{He}) that
\[
\Gamma(t)=0,\ \ as\ \ t\in\lbrack0,T]\text{.}
\]
Hence
\[
\sup_{x}\Bbb{E}|u(t,x)-\tilde{u}(t,x)|^{2}=0,\ \ as\ \ t\in\lbrack0,T],
\]
and therefore
\[
\sup_{x}\Bbb{E}|u^{\theta}(t,x)-\tilde{u}^{\theta}(t,x)|^{2}=0,\ \ as\ \ t\in\lbrack0,T].
\]
That is,
\[
(u(t,x),u^{\theta}(t,x))=(\tilde{u}(t,x),\tilde{u}^{\theta}(t,x)),\ \ (t,x)\in\lbrack0,T]\times D,
\]
in $L^{2}$ sense. The proof thus is completed. \end{proof}

\subsection{Existence}

\begin{theorem} \label{thq1}Suppose that $H_{h_{1},h_{2}}$, $H_{u_{0}}$
and $H_{g}$ hold, then there exists one solution $(u,u^{\theta})$
of the SPDE (\ref{EQ u}), where $u\in\Bbb{S}$ and $u^{\theta}\in\Bbb{S}$.
\end{theorem}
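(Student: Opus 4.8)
The plan is to construct the solution by Picard iteration in the space $\Bbb{S}$, exploiting exactly the two a priori estimates (Lemma \ref{estimation on integral}, together with Remark \ref{remark on p'}) and Lemma \ref{estimation on fractionl integral}) that were set up for the uniqueness proof. Define $u_{0}(t,x)=\int_{D}p(t,x,y)u_{0}(y)\D y+\int_{0}^{t}\int_{D}p(t-s,x,y)B^{H}(\D s,\D y)$ and $u_{0}^{\theta}(t,x)=\frac1\theta(u_{0}(t,x+\theta)-u_{0}(t,x))$, and then iterate
\[
u_{n+1}(t,x)=u_{0}(t,x)+\int_{0}^{t}\int_{D}p(t-s,x,y)g\bigl(u_{n}(s,y),u_{n}^{\theta}(s,y)\bigr)\D y\D s,
\]
with $u_{n+1}^{\theta}$ obtained by applying the difference operator $\frac1\theta(\,\cdot(x+\theta)-\cdot(x))$. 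First I would check that the zeroth iterate lies in $\Bbb{S}$: boundedness of $\int_{D}p(t,x,y)u_{0}(y)\D y$ follows from $\|u_{0}\|_{\infty}<\infty$ and $\int_{D}p(t,x,y)\D y\le 1$; for the stochastic convolution I apply Lemma \ref{estimation on fractionl integral} with $f(s,y)=p(t-s,x,y)$, for which the appendix estimates on the Green function give a finite $\int_{0}^{t}(\|p(t-s,x,\cdot)\|_{L^{1/h_{2}}})^{1/h_{1}}\D s$; the same argument applied to $\partial_x p$ (bounded by $Ct^{-1}e^{-(x-y)^2/4t}$ as in Remark \ref{remark on p'}) handles $u_{0}^{\theta}$. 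Since $2h_1+h_2>2$ is precisely what is needed for these integrals to converge, this is where Hypothesis \ref{hypothesis 1}(2) enters.

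Next I would set $\Delta_n(t)=\sup_x\Bbb{E}|u_{n+1}(t,x)-u_n(t,x)|^2+\sup_x\Bbb{E}|u_{n+1}^{\theta}(t,x)-u_n^{\theta}(t,x)|^2$. Exactly as in the uniqueness proof, using the Lipschitz hypothesis $H_g$, Lemma \ref{estimation on integral} with $\rho=1/2$ (so $(t-s)^{1/2-\rho}=1$) for the $u$-difference and Remark \ref{remark on p'} for the $u^{\theta}$-difference, one gets
\[
\Delta_n(t)\le C\int_{0}^{t}\bigl(1+(t-s)^{-1/2}\bigr)\Delta_{n-1}(s)\,\D s.
\]
Iterating this singular-kernel inequality gives $\Delta_n(t)\le (C't)^{n}/\Gamma(n/2+1)$-type bounds (the half-power convolution kernel $1+(t-s)^{-1/2}$ has the property that its $n$-fold convolution decays factorially, which is the content of the generalized Gronwall lemma from \cite{He}), so $\sum_n\sup_{t\le T}\Delta_n(t)^{1/2}<\infty$. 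Hence $(u_n,u_n^{\theta})$ is Cauchy in $C([0,T];\Bbb{S})$ with the norm $u\mapsto\sup_{t\le T}(\sup_x\Bbb{E}|u(t,x)|^2)^{1/2}$, and converges to a limit $(u,u^{\theta})$ in $\Bbb{S}$.

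Finally I would pass to the limit in the integral equation. The drift term converges because $g$ is Lipschitz and the kernel estimate of Lemma \ref{estimation on integral} controls $\sup_x\Bbb{E}|\int_0^t\int_D p(t-s,x,y)(g(u_n,u_n^{\theta})-g(u,u^{\theta}))\D y\D s|^2$ by $C\int_0^t\Delta(s)\D s\to0$; the stochastic term is unchanged along the iteration. One also checks that the difference relation $u_{n}^{\theta}(t,x)=\frac1\theta(u_{n}(t,x+\theta)-u_{n}(t,x))$ is stable under $L^2$-limits, so the limit pair genuinely satisfies the second line of (\ref{EQ u}), and that $u,u^{\theta}\in\Bbb{S}$. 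I expect the main obstacle to be the summability of the Picard increments under the singular kernel $1+(t-s)^{-1/2}$: one must invoke (or re-derive) the fractional Gronwall/Henry lemma rather than the classical one, and keep careful track that the constant $C$ absorbing $L$, $T$, $c_H$ and the Green-function bounds does not depend on $n$ or on $\theta$ (the latter $\theta$-uniformity is what will later make the $\theta\to0$ limit in Section 4 possible). The $x$-integrability/measurability bookkeeping — ensuring $\sup_x$ and $\Bbb{E}$ interchange legitimately with the iteration and that each iterate is jointly measurable and $\f_t$-adapted — is routine but should be noted.
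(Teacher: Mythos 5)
Your proposal is correct and follows essentially the same route as the paper: the same Picard iteration, the same a priori bound on each iterate via Lemma \ref{estimation on integral}, Remark \ref{remark on p'} and Lemma \ref{estimation on fractionl integral} (with $2h_{1}+h_{2}>2$ entering exactly where you say), and the same recursive inequality with the singular kernel $1+(t-s)^{-\frac{1}{2}}$. The only difference is cosmetic: you extract the Cauchy property by iterating that inequality to factorial decay of successive increments, whereas the paper applies Henry's Gronwall lemma to $\Phi(t)=\limsup_{n}\sup_{k}(\cdots)$ to show it vanishes; both rest on the same singular-Gronwall machinery.
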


To prove Theorem \ref{thq1}, let us consider the Picard iteration
$\{(u_{n}(t,x),u_{n}^{\theta}(t,x))\}_{n\geq0}$ defined by
\[
\left\{ \begin{array}{ll}
u_{n+1}(t,x)=\int_{D}p(t,x,y)u_{0}(y)dy+\int_{0}^{t}\int_{D}p(t-s,x,y)g(u_{n}(s,y),u_{n}^{\theta}(s,y))dsdy\\
\ \ \ \ \ \ \ \ \ \ \ \ \ \ \ +\int_{0}^{t}\int_{D}p(t-s,x,y)B^{H}(ds,dy)\text{,}\\
u_{n}^{\theta}(t,x)=\frac{1}{\theta}\left(u_{n}(t,x+\theta)-u_{n}(t,x)\right)
\end{array}\right.
\]
where
\[
u_{0}(t,x):=\int_{D}p(t,x,y)u_{0}(y)dy.
\]
If $u_{n}(t,x)\in\Bbb{S}$, then clearly $u_{n}^{\theta}(t,x)\in\Bbb{S}$.
We check that $u_{n+1}\in\Bbb{S}$. Note that
\[
\Bbb{E}\left(\int_{D}p(t,x,y)u_{0}(y)dy\right)^{2}\leq||u_{0}||_{\infty}^{2}\text{,}
\]
so that, by Lemma \ref{estimation on integral}, we have
\begin{eqnarray*}
 &  & \Bbb{E}\left(\int_{0}^{t}\int_{D}p(t-s,x,y)g(u_{n}(s,y),u_{n}^{\theta}(s,y))dsdy\right)^{2}\\
 & \leq & C\left(1+\int_{0}^{t}\sup_{y}\Bbb{E}|u_{n}(s,y)|^{2}ds+\int_{0}^{t}\sup_{y}\Bbb{E}|u_{n}^{\theta}(s,y)|^{2}ds\right)\text{.}
\end{eqnarray*}
While, by Lemma \ref{estimation on fractionl integral}, one gets
\[
\Bbb{E}\left(\int_{0}^{t}\int_{D}p(t-s,x,y)B^{H}(ds,dy)\right)^{2}\leq C(h_{1},h_{2})\left(\int_{0}^{t}(\Vert p(t-s,x,\cdot)\Vert_{L^{\frac{1}{h_{2}}}(D)})^{\frac{1}{h_{1}}}ds\right)^{2h_{1}}
\]
and
\begin{eqnarray*}
\Vert p(t-s,x,\cdot)\Vert_{L^{\frac{1}{h_{2}}}(D)} & = & \left(\int_{D}|p(t-s,x,y)|^{\frac{1}{h_{2}}}dy\right)^{h_{2}}\\
 & \leq & C\left(\int_{D}(t-s)^{-\frac{1}{2h_{2}}}\exp(-\frac{1}{4h_{2}}\frac{|x-y|^{2}}{t-s})dy\right)^{h_{2}}\\
 & \leq & C\left((t-s)^{\frac{1}{2}(1-\frac{1}{h_{2}})}\right)^{h_{2}}\\
 & = & C(t-s)^{\frac{1}{2}(h_{2}-1)}\text{.}
\end{eqnarray*}
Then
\begin{eqnarray}
\Bbb{E}\left(\int_{0}^{t}\int_{D}p(t-s,x,y)B^{H}(ds,dy)\right)^{2} & \leq & C(h_{1},h_{2})\left(\int_{0}^{t}(\Vert p(t-s,x,\cdot)\Vert_{L^{\frac{1}{h_{2}}}(D)})^{\frac{1}{h_{1}}}ds\right)^{2h_{1}}\nonumber \\
 & \leq & C(h_{1},h_{2})t^{2h_{1}+h_{2}-1}\nonumber \\
 & \leq & C(h_{1},h_{2},T)\label{q8}
\end{eqnarray}
and therefore
\begin{eqnarray}
\sup_{x}\Bbb{E}|u_{n+1}(t,x)|^{2} & \leq & C+C\int_{0}^{t}\sup_{y}\Bbb{E}|u_{n}(s,y)|^{2}ds\nonumber \\
 &  & +C\int_{0}^{t}\sup_{y}\Bbb{E}|u_{n}^{\theta}(s,y)|^{2}ds.\label{q21}
\end{eqnarray}

On the other hand,
\begin{eqnarray*}
u_{n+1}^{\theta}(t,x) & = & \int_{D}\frac{1}{\theta}\left(p(t,x+\theta,y)-p(t,x,y)u_{0}(y)\right)dy\\
 &  & +\int_{0}^{t}\int_{D}\frac{1}{\theta}\left(p(t-s,x+\theta,y)-p(t-s,x,y)\right)g(u_{n}(s,y),u_{n}^{\theta}(s,y))dyds\\
 &  & +\int_{0}^{t}\int_{D}\frac{1}{\theta}\left(p(t-s,x+\theta,y)-p(t-s,x,y)\right)B^{H}(ds,dy)\\
 & = & \int_{D}\int_{0}^{1}\frac{\partial}{\partial x}p(t,x+a\theta,y)u_{0}(y)dady\\
 &  & +\int_{0}^{t}\int_{D}\int_{0}^{1}\frac{\partial}{\partial x}p(t-s,x+a\theta,y)g(u_{n}(s,y),u_{n}^{\theta}(s,y))dadyds\\
 &  & +\int_{0}^{t}\int_{D}\int_{0}^{1}\frac{\partial}{\partial x}p(t-s,x+a\theta,y)daB^{H}(ds,dy),
\end{eqnarray*}
and
\begin{eqnarray*}
\Bbb{E}\left(\int_{D}\int_{0}^{1}\frac{\partial}{\partial x}p(t,x+a\theta,y)u_{0}(y)dady\right)^{2} & = & \Bbb{E}\left(\int_{0}^{1}\left(-\int_{D}p(t,x+a\theta,y)u_{0}^{^{\prime}}(y)dy\right)da\right)^{2}\\
 & \leq & ||u_{0}^{^{\prime}}||_{\infty}.
\end{eqnarray*}
By Hypothesis \ref{hypothesis 3} and Remark \ref{remark on p'},
we thus obtain
\begin{eqnarray*}
 &  & \Bbb{E}\left(\int_{0}^{t}\int_{D}\int_{0}^{1}\frac{\partial}{\partial x}p(t-s,x+a\theta,y)g(u_{n}(s,y),u_{n}^{\theta}(s,y))dadyds\right)^{2}\\
 & \leq & C+C\int_{0}^{t}(t-s)^{-\frac{1}{2}}\sup_{y}\Bbb{E}|u_{n}(s,y)|^{2}ds+C\int_{0}^{t}(t-s)^{-\frac{1}{2}}\sup_{y}\Bbb{E}|u_{n}^{\theta}(s,y)|^{2}ds.
\end{eqnarray*}
By a similar argument as those in the proof of (\ref{q8}), we have
\begin{eqnarray}
 &  & \Bbb{E}\left(\int_{0}^{t}\int_{D}\int_{0}^{1}\frac{\partial}{\partial x}p(t-s,x+a\theta,y)daB^{H}(ds,dy)\right)^{2}\nonumber \\
 & \leq & C(h_{1},h_{2})\left(\int_{0}^{t}\left(\left\Vert \int_{0}^{1}\frac{\partial}{\partial x}p(t-s,x+a\theta,\cdot)da\right\Vert _{L^{\frac{1}{h_{2}}}(D)}\right)^{\frac{1}{h_{1}}}ds\right)^{2h_{1}}\nonumber \\
 & \leq & C(h_{1},h_{2})\int_{0}^{t}(t-s)^{\frac{1}{2h_{1}}(h_{2}-2)}ds\nonumber \\
 & \leq & C(h_{1},h_{2},T)\text{,}\label{q9}
\end{eqnarray}
where we have used the assumption that $2h_{1}+h_{2}-2>0$, the Fubini
theorem and Lemma \ref{estimation on P} which yields that
\begin{eqnarray*}
 &  & \left\Vert \int_{0}^{1}\frac{\partial}{\partial x}p(t-s,x+a\theta,\cdot)da\right\Vert _{L^{\frac{1}{h_{2}}}(D)}\\
 & = & \left(\int_{D}\left|\int_{0}^{1}\frac{\partial}{\partial x}p(t-s,x+a\theta,y)da\right|^{\frac{1}{h_{2}}}dy\right)^{h_{2}}\\
 & \leq & C\left(\int_{D}\int_{0}^{1}\left|\frac{\partial}{\partial x}p(t-s,x+a\theta,y)\right|^{\frac{1}{h_{2}}}dady\right)^{h_{2}}\\
 & \leq & C\left(\int_{0}^{1}\left(\int_{D}(t-s)^{-\frac{1}{h_{2}}}\exp(-\frac{1}{4h_{2}}\frac{|x+a\theta-y|^{2}}{t-s})dy\right)da\right)^{h_{2}}\\
 & \leq & C\left((t-s)^{\frac{1}{2h_{2}}(h_{2}-2)}\right)^{h_{2}}\\
 & = & C(t-s)^{\frac{1}{2}(h_{2}-2)}\text{.}
\end{eqnarray*}
Therefore
\begin{eqnarray}
 &  & \sup_{x}\Bbb{E}|u_{n+1}^{\theta}(t,x)|^{2}\nonumber \\
 & \leq & C+C\left(\int_{0}^{t}(t-s)^{-\frac{1}{2}}\sup_{y}\Bbb{E}|u_{n}(s,y)|^{2}ds+\int_{0}^{t}(t-s)^{-\frac{1}{2}}\sup_{y}\Bbb{E}|u_{n}^{\theta}(s,y)|^{2}ds\right).\label{q11}
\end{eqnarray}

Let
\[
\Psi_{n}(t)=\sup_{x}\Bbb{E}|u_{n}(t,x)|^{2}+\sup_{x}\Bbb{E}|u_{n}^{\theta}(t,x)|^{2}
\]
and
\[
\Psi(t)=\limsup_{n}\Psi_{n}(t)\text{.}
\]
Then, by (\ref{q21}) and (\ref{q11}), we get
\begin{eqnarray*}
\Psi_{n+1}(t) & \leq & C+C\int_{0}^{t}\left(1+(t-s)^{-\frac{1}{2}}\right)\sup_{y}\Bbb{E}|u_{n}(s,y)|^{2}ds\\
 &  & +C\int_{0}^{t}\left(1+(t-s)^{-\frac{1}{2}}\right)\sup_{y}\Bbb{E}|u_{n}^{\theta}(s,y)|^{2}ds\\
 & = & C+C\int_{0}^{t}\left(1+(t-s)^{-\frac{1}{2}}\right)\Psi_{n}(s)ds\text{,}
\end{eqnarray*}
and therefore
\[
\Psi(t)\leq C+C\int_{0}^{t}\left(1+(t-s)^{-\frac{1}{2}}\right)\Psi(s)ds\text{.}
\]
By applying the Gronwall inequality, to obtain that $\Psi(t)<\infty$
for $t\in\lbrack0,T]$. It follows that
\[
\sup_{x}\Bbb{E}|u_{n}(t,x)|^{2}<\infty,\ \ as\ \ t\in\lbrack0,T],
\]
and
\[
\sup_{x}\Bbb{E}|u_{n}^{\theta}(t,x)|^{2}<\infty,\ \ as\ \ t\in\lbrack0,T].
\]
Therefore, for any $n$ and $(t,x)\in\lbrack0,T]\times D$,
\[
u_{n}(t,x)\in\Bbb{S},\ \ u_{n}^{\theta}(t,x)\in\Bbb{S}\text{.}
\]

We next prove \ that the sequences $\{(u_{n}(t,x)\}_{n\geq0}$ and
$\{(u_{n}^{\theta}(t,x)\}_{n\geq0}$ are Cauchy sequences in $\Bbb{S}$.
To this end, consider
\[
\left\{ \begin{array}{ll}
u_{n+k+1}(t,x)-u_{n+1}(t,x)\\
\ \ \ =\int_{0}^{t}\int_{D}p(t-s,x,y)\left[g(u_{n+k}(s,y),u_{n+k}^{\theta}(s,y))-g(u_{n}(s,y),u_{n}^{\theta}(s,y))\right]dyds\\
u_{n}^{\theta}(t,x)=\frac{1}{\theta}\left(u_{n}(t,x+\theta)-u_{n}(t,x)\right),
\end{array}\right.
\]
where $k=1,2,3,\cdot\cdot\cdot$. Then, there is a similar estimate
as (\ref{q21}) for the difference
\begin{eqnarray}
 &  & \sup_{x}\Bbb{E}|u_{n+k+1}(t,x)-u_{n+1}(t,x)|^{2}\nonumber \\
 & \leq & C\int_{0}^{t}\sup_{y}\Bbb{E}|u_{n+k}(s,y)-u_{n}(s,y)|^{2}ds\nonumber \\
 &  & +C\int_{0}^{t}\sup_{y}\Bbb{E}|u_{n+k}^{\theta}(s,y)-u_{n}^{\theta}(s,y)|^{2}ds\text{.}\label{q22}
\end{eqnarray}
On the other hand
\begin{eqnarray}
 &  & \sup_{x}\Bbb{E}|u_{n+k+1}^{\theta}(t,x)-u_{n+1}^{\theta}(t,x)|^{2}\nonumber \\
 & \leq & C\int_{0}^{t}(t-s)^{-\frac{1}{2}}\sup_{y}\Bbb{E}|u_{n+k}(s,y)-u_{n}(s,y)|^{2}ds\nonumber \\
 &  & +C\int_{0}^{t}(t-s)^{-\frac{1}{2}}\sup_{y}\Bbb{E}|u_{n+k}^{\theta}(s,y)-u_{n}^{\theta}(s,y)|^{2}ds\text{.}\label{q23}
\end{eqnarray}
So that, by (\ref{q22}) and (\ref{q23}),
\[
\Phi(t)\leq C\int_{0}^{t}\left(1+(t-s)^{-\frac{1}{2}}\right)\Phi(s)ds
\]
where
\[
\Phi(t)=\limsup_{n\rightarrow\infty}\sup_{k}\left(\sup_{x}\Bbb{E}|u_{n+k+1}(t,x)-u_{n+1}(t,x)|^{2}+\sup_{x}\Bbb{E}|u_{n+k+1}^{\theta}(t,x)-u_{n+1}^{\theta}(t,x)|^{2}\right)\text{.}
\]
Once again by Gronwall inequality,
\[
\Phi(t)=0,\ \ t\in\lbrack0,T].
\]
Then
\[
\sup_{x}\Bbb{E}|u_{n+k}(t,x)-u_{n}(t,x)|^{2}\rightarrow0
\]
as $n\rightarrow\infty$, for every $k$ and for all$\ \ t\in\lbrack0,T]$.
Therefore
\[
\sup_{x}\Bbb{E}|u_{n+k}^{\theta}(t,x)-u_{n}^{\theta}(t,x)|^{2}\rightarrow0
\]
as $n\rightarrow\infty$ for$\ t\in\lbrack0,T]$. That is, $\{u_{n}(t,x)\}$
and $\{u_{n}^{\theta}(t,x)\}$ are Cauchy sequences on $\Bbb{S}$.
The limits of these sequences are denoted by $u(t,x)$ and $u^{\theta}(t,x)$
which also belong to $\Bbb{S}$. Therefore the pair $(u(t,x),u^{\theta}(t,x))$
is a solution of the SPDE (\ref{EQ u}).

\subsection{Regularity of the unique solution}

\label{regularity}

Let $(u(t,x),u^{\theta}(t,x))$ be the solution of the stochastic
equation (\ref{EQ u}) under the assumptions as in Theorem \ref{thq1}.
Then $u(t,x)\in\Bbb{S}$ and $u^{\theta}(t,x)\in\Bbb{S}$. We next
discuss the Hölder continuity of $u(t,x)$ and $u^{\theta}(t,x)$.

\begin{theorem} Assume that $H_{h_{1},h_{2}}$, $H_{u_{0}}$ and
$H_{g}$ hold, and that $u(t,x)$ is the solution of the equation
(\ref{EQ u}). Then $u(t,x)$ is $\mu_{1}$-Hölder continuous in $t$
and $\nu_{1}$-Hölder continuous in $x$, where $\mu_{1}\in(0,\frac{1}{2})$
and $\nu_{1}\in(0,1)$. Moreover, $u^{\theta}(t,x)$ is $\mu_{2}$-Hölder
continuous in $t$ and $\nu_{2}$-Hölder continuous in $x$, where
$\mu_{2}\in(0,\min\{\frac{\kappa}{2},\frac{2h_{1}+h_{2}-1}{3}\})$
and $\nu_{2}\in(0,\min\{\kappa,\frac{2h_{1}+h_{2}-1}{2}\})$. \end{theorem}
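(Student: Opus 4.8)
The plan is to establish Hölder continuity for $u$ and $u^\theta$ by using the mild formulation of \eqref{EQ u} and the Kolmogorov continuity criterion, treating the three contributions --- the deterministic initial-data term, the drift integral involving $g$, and the stochastic convolution against $B^H$ --- separately. For $u$, write $u(t,x)=\int_D p(t,x,y)u_0(y)\,dy + \int_0^t\!\int_D p(t-s,x,y)g(u(s,y),u^\theta(s,y))\,dy\,ds + \int_0^t\!\int_D p(t-s,x,y)B^H(ds,dy)$. The first term is smooth in $(t,x)$ away from $t=0$ by standard heat-kernel estimates (Hypothesis~\ref{hypothesis 2}); the third term is the delicate one. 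The second term is controlled using Lemma~\ref{estimation on integral} together with the a priori bound $u,u^\theta\in\Bbb S$ already proved in Theorem~\ref{thq1}, which makes $\sup_y\Bbb E(g(u(s,y),u^\theta(s,y)))^2$ bounded on $[0,T]$ by the Lipschitz hypothesis $H_g$; the resulting time integral of $(t-s)^{\frac12-\rho}$-type kernels produces Hölder moduli, and one then applies Kolmogorov's criterion after raising to a high power $2m$ (Gaussianity and the fact that each term is in every $L^p$ makes this legitimate; the $g$-term is bounded pointwise in $\omega$ after one more Cauchy--Schwarz).

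\medskip

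The main work is the stochastic convolution $Z(t,x):=\int_0^t\!\int_D p(t-s,x,y)B^H(ds,dy)$. The strategy is to estimate $\Bbb E|Z(t,x)-Z(t',x)|^2$ and $\Bbb E|Z(t,x)-Z(t,x')|^2$ via Lemma~\ref{estimation on fractionl integral}. For the space increment, the integrand becomes $p(t-s,x,y)-p(t-s,x',y)$, and one bounds its $L^{1/h_2}(D)$-norm in $y$; using the gradient estimate $|\partial_x p(t,x,y)|\le Ct^{-1}e^{-(x-y)^2/(4t)}$ from Lemma~\ref{estimation on P} one interpolates between the trivial bound $C(t-s)^{\frac12(h_2-1)}$ (from the computation already done in the proof of \eqref{q8}) and the bound $C|x-x'|(t-s)^{-1}\cdot(t-s)^{\frac12(h_2-1)}$ to gain a factor $|x-x'|^{2\nu}$ for $\nu<1$, leaving a time-integral $\int_0^t (t-s)^{(\frac12(h_2-1)-\nu)\frac{1}{h_1}}ds$ raised to the power $2h_1$, which converges precisely when $2h_1+h_2-1>2\nu$, i.e. $\nu<\frac{2h_1+h_2-1}{2}$. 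For the time increment one splits $Z(t,\cdot)-Z(t',\cdot)$ (say $t'<t$) into the ``new mass'' $\int_{t'}^t$ and the ``old mass'' $\int_0^{t'}(p(t-s,\cdot)-p(t'-s,\cdot))$; the first is estimated directly, the second via the identity $p(t-s,x,y)-p(t'-s,x,y)=\int_{t'-s}^{t-s}\partial_r p(r,x,y)\,dr=\tfrac12\int_{t'-s}^{t-s}\Delta_x p(r,x,y)\,dr$ and the bound $|\partial_t p(r,x,y)|\le Cr^{-3/2}e^{-(x-y)^2/(4r)}$ from the Appendix, again interpolating to extract $|t-t'|^{\mu}$ with $\mu<\frac12$; the constraint $2h_1+h_2>2$ is exactly what keeps the time exponents integrable. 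This gives $u$ its claimed $\mu_1<\frac12$, $\nu_1<1$ regularity (the ceiling $\nu_1<1$ rather than something smaller comes because $2h_1+h_2-1>2$ is not assumed --- wait, it is only $>2$, so $\frac{2h_1+h_2-1}{2}>\frac12$, and combined with the heat-semigroup smoothing of the drift and initial terms one can in fact push $\nu_1$ up to $1$; this point should be checked, but it is the regime where the noise is smooth enough that $u$ inherits the spatial regularity of a deterministic heat equation).

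\medskip

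For $u^\theta(t,x)=\frac1\theta(u(t,x+\theta)-u(t,x))$ one repeats the scheme with $p$ replaced by $\frac1\theta(p(t-s,x+\theta,y)-p(t-s,x,y))=\int_0^1\partial_x p(t-s,x+a\theta,y)\,da$, exactly as in Remark~\ref{remark on p'} and in the existence proof. The drift term then contributes, via the $(t-s)^{-1/2}$ kernel of Remark~\ref{remark on p'} and the $\Bbb S$-bounds, a Hölder modulus; the initial-data term now involves $u_0'$, whose $\kappa$-Hölder continuity (Hypothesis~\ref{hypothesis 2}(3)) limits the time regularity to $\mu_2<\kappa/2$; and the stochastic term, by Lemma~\ref{estimation on fractionl integral} with the $L^{1/h_2}$-estimate $\|\int_0^1\partial_x p(t-s,x+a\theta,\cdot)\,da\|_{L^{1/h_2}(D)}\le C(t-s)^{\frac12(h_2-2)}$ already derived above \eqref{q9}, produces after interpolation a time-integral converging iff the exponent exceeds the relevant threshold, yielding $\nu_2<\frac{2h_1+h_2-1}{2}$ and, for the time increment (which costs an extra half-power through $\partial_t p\sim r^{-3/2}$ against the already singular $r^{h_2/2-1}$), $\mu_2<\frac{2h_1+h_2-1}{3}$; taking the minimum with the $\kappa$-constraints gives the stated ranges. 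The main obstacle throughout is bookkeeping the competing time-singularities: one must verify in each case that after extracting the desired power of $|t-t'|$ or $|x-x'|$ the leftover exponent on $(t-s)$ is still $>-1$ (for the drift, where Lemma~\ref{estimation on integral} applies directly) or that its $\frac{1}{h_1}$-power is $>-1$ (for the stochastic term, where Lemma~\ref{estimation on fractionl integral} applies), and it is precisely Hypothesis~\ref{hypothesis 1}(2), $2h_1+h_2>2$, that buys the slack needed in the worst case --- the time-increment of the stochastic convolution for $u^\theta$. Finally one concludes by Kolmogorov's continuity theorem, raising all the $L^2$-increment bounds to an arbitrarily large even power $2m$ --- legitimate for the Gaussian stochastic term and, for the drift term, after one preliminary pointwise Cauchy--Schwarz in $(s,y)$ followed by the $\Bbb S$-bounds --- so that the moment exponents $m\mu_i$, $m\nu_i$ exceed the dimension and a continuous modification with the asserted exponents exists.
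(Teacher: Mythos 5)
Your proposal follows essentially the same route as the paper: the mild-form decomposition into initial-data, drift and stochastic-convolution terms; Cauchy--Schwarz plus the $\Bbb S$-bounds and Lemma \ref{estimation on integral} for the drift; and an interpolation between the bound on the kernel and on its spatial derivative (respectively, between $\int_0^1\frac{\partial}{\partial x}p(\cdot,x+a\theta,\cdot)\,da$ and $\frac{\partial^2}{\partial x^2}p$ for $u^{\theta}$) to extract the increment from the fractional integral, with the constraints $\kappa$, $\frac{2h_1+h_2-1}{2}$ and $\frac{2h_1+h_2-1}{3}$ arising from the same sources. The point you flagged about $\nu_1$ is resolved by correcting one exponent: by the computation preceding (\ref{q9}), $\bigl\Vert\frac{\partial}{\partial x}p(t-s,x,\cdot)\bigr\Vert_{L^{1/h_2}(D)}\le C(t-s)^{\frac12(h_2-2)}$, which is only a factor $(t-s)^{-1/2}$ worse than $\Vert p(t-s,x,\cdot)\Vert_{L^{1/h_2}(D)}\le C(t-s)^{\frac12(h_2-1)}$, not $(t-s)^{-1}$ worse as you wrote; the interpolated time exponent is then $\frac{h_2-1-\nu}{2h_1}$, integrable precisely for $\nu<2h_1+h_2-1$, and since Hypothesis \ref{hypothesis 1} gives $2h_1+h_2-1>1$ the binding constraint is the interpolation cap $\nu<1$, so $\nu_1\in(0,1)$ as claimed (this is exactly the paper's estimate of $I_{31}$). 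One caveat on your final step: the paper stops at the second-moment increment bounds $\Bbb E|u(t,x)-u(s,y)|^2\le C(|t-s|^{2\mu_1}+|x-y|^{2\nu_1})$ and does not carry out a Kolmogorov upgrade; to raise the drift term to the power $2m$ you would need $\sup_x\Bbb E|u(t,x)|^{2m}<\infty$ and likewise for $u^{\theta}$, which membership in $\Bbb S$ does not provide and which would require rerunning the fixed-point argument in $L^{2m}$ --- the ``bounded pointwise in $\omega$ after one more Cauchy--Schwarz'' shortcut does not work since $g(u,u^{\theta})$ is unbounded.
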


The remainder of the section is devoted to the proof of the theorem
above.

Without loss of generality, we suppose that $0\leq s\leq t\leq T$
and $0\leq y\leq x$. First observe that
\[
\Bbb{E}(u(t,x)-u(s,y))^{2}\leq2\left(\Bbb{E}(u(t,x)-u(t,y))^{2}+\Bbb{E}(u(t,y)-u(s,y))^{2}\right)\text{.}
\]
It is elementary to see that
\begin{eqnarray*}
 &  & \Bbb{E}(u(t,x)-u(t,y))^{2}\\
 & \leq & C\left(\Bbb{E}\left[\int_{D}(p(t,x,z)-p(t,y,z))u_{0}(z)dz\right]\right)^{2}\\
 &  & +\Bbb{E}\left(\int_{0}^{t}\int_{D}(p(t-r,x,z)-p(t-r,y,z))g(u(r,z),u^{\theta}(r,z))dzdr\right)^{2}\\
 &  & +\Bbb{E}\left(\int_{0}^{t}\int_{D}(p(t-r,x,z)-p(t-r,y,z))B^{H}(dr,dz)\right)^{2}\\
 & = & C(I_{1}+I_{2}+I_{3})\text{.}
\end{eqnarray*}
According to Hypothesis \ref{hypothesis 2}, we have
\begin{eqnarray*}
|u_{0}(x)-u_{0}(y)| & = & |u_{0}^{^{\prime}}(\cdot)||x-y|\\
 & \leq & \Vert u_{0}^{^{\prime}}\Vert_{\infty}|x-y|\text{.}
\end{eqnarray*}
By Lemma \ref{holder on u0},
\[
I_{1}=\Bbb{E}\left(\int_{D}(p(t,x,z)-p(t,y,z))u_{0}(z)dz\right)^{2}\leq C|x-y|^{2}\text{.}
\]
Let us deal with $I_{2}$. Clearly
\begin{eqnarray*}
 &  & \left|\int_{0}^{t}\int_{D}\left|p(t-r,x,z)-p(t-r,y,z)\right|g(u(r,z),u^{\theta}(r,z))dyds\right|\\
 & \leq & C\int_{0}^{t}\int_{D}\left|p(t-r,x,z)-p(t-r,y,z)\right|\left(1+|u(r,z)|+|u^{\theta}(r,z)|\right)|dzdr\\
 & \leq & C\left\{ \int_{0}^{t}\int_{D}\left|p(t-r,x,z)-p(t-r,y,z)\right|dzdr\right.\\
 &  & +\int_{0}^{t}\int_{D}\left|p(t-r,x,z)-p(t-r,y,z)\right||u(r,z)|dzdr\\
 &  & +\left.\int_{0}^{t}\int_{D}\left|p(t-r,x,z)-p(t-r,y,z)\right||u^{\theta}(r,z)|dzdr\right\} \text{.}
\end{eqnarray*}
While, by Lemma \ref{estimation on P} and the Fubini theorem, one
gets
\begin{eqnarray*}
 &  & \Bbb{E}\left(\int_{0}^{t}\int_{D}\left|p(t-r,x,z)-p(t-r,y,z)\right||u(r,z)|dzdr\right)^{2}\\
 & \leq & \int_{0}^{t}\int_{D}\left|p(t-r,x,z)-p(t-r,y,z)\right|dzdr\\
 &  & \times\int_{0}^{t}\int_{D}\left|p(t-r,x,z)-p(t-r,y,z)\right||u(r,z)|^{2}dzdr\\
 & \leq & C\left(\int_{0}^{t}\int_{D}\left|p(t-r,x,z)-p(t-r,y,z)\right|\right)^{2}dzdr\\
 & = & C|x-y|^{2}\left(\int_{0}^{t}\int_{D}\left|\int_{0}^{1}\frac{\partial}{\partial x}p(t-r,y+a(x-y),z)da\right|dzdr\right)^{2}\\
 & \leq & C|x-y|^{2}\left(\int_{0}^{1}\left(\int_{0}^{t}\int_{D}(t-r)^{-1}e^{-\frac{(y+a(x-y)-z)^{2}}{4(t-r)}}dzdr\right)da\right)^{2}\\
 & \leq & C|x-y|^{2}\text{,}
\end{eqnarray*}
and
\begin{eqnarray*}
 &  & \int_{0}^{t}\int_{D}\left|p(t-r,x,z)-p(t-r,y,z)\right||u^{\theta}(r,z)|dzdr\\
 & = & (x-y)\int_{0}^{t}\int_{D}\left|\int_{0}^{1}\frac{\partial}{\partial x}p(t-r,y+a(x-y),z)da\right||u^{\theta}(r,z)|dzdr\text{.}
\end{eqnarray*}
Thus, by using Lemma \ref{estimation on P} and Remark \ref{remark on p'},
we have
\begin{eqnarray*}
 &  & \Bbb{E}\left(\int_{0}^{t}\int_{D}|p(t-r,x,z)-p(t-r,y,z)||u^{\theta}(r,z)|dzdr\right)^{2}\\
 & \leq & C|x-y|^{2}\int_{0}^{t}(t-r)^{-\frac{1}{2}}\sup_{z}\Bbb{E}|u^{\theta}(r,z)|^{2}dr\\
 & \leq & C|x-y|^{2},
\end{eqnarray*}
and therefore,
\[
I_{2}\leq C|x-y|^{2}\text{.}
\]
Next we estimate $I_{3}$. Let $\gamma\in(0,\min\{2h_{1}+h_{2}-1,1\})=(0,1)$.
Then
\begin{eqnarray*}
I_{3} & = & \Bbb{E}\left(\int_{0}^{t}\int_{D}(p(t-r,x,z)-p(t-r,y,z))B^{H}(dr,dz)\right)^{2}\\
 & = & \int_{[0,s]^{2}}\!\!\int_{D^{2}}\Psi_{h}(r,\bar{r},z,\bar{z})|p(t-r,x,z)-p(t-r,y,z)|\\
 &  & \times|p(t-\bar{r},x,\bar{z})-p(t-\bar{r},y,\bar{z})|dzd\bar{z}drd\bar{r}\\
 & = & \left\Vert p(t-\cdot,x,\cdot)-p(t-\cdot,y,\cdot)\right\Vert _{L_{H}^{2}}^{2}\\
 & = & \left\Vert |p(t-\cdot,x,\cdot)-p(t-\cdot,y,\cdot)|^{\gamma}|p(t-\cdot,x,\cdot)-p(t-\cdot,y,\cdot)|^{1-\gamma}\right\Vert _{{L_{H}^{2}}}^{2}\\
 & \leq & C(\gamma)\left(\left\Vert |p(t-\cdot,x,\cdot)-p(t-\cdot,y,\cdot)|^{\gamma}|p(t-\cdot,x,\cdot)|^{1-\gamma}\right\Vert _{{L_{H}^{2}}}^{2}\right.\\
 &  & \left.+\left\Vert |p(t-\cdot,x,\cdot)-p(t-\cdot,y,\cdot)|^{\gamma}|p(t-\cdot,y,\cdot)|^{1-\gamma}\right\Vert _{{L_{H}^{2}}}^{2}\right)\\
 & := & C(\gamma)(I_{31}+I_{32})\text{.}
\end{eqnarray*}
On other hand, according to Lemma \ref{estimation on P} and the Fubini
theorem, one can get
\begin{eqnarray*}
I_{31} & \leq & \left\Vert \left|\int_{0}^{1}\frac{\partial}{\partial x}p(t-\cdot,y+a(x-y),\cdot)da\right|^{\gamma}|x-y|^{\gamma}|p(t-\cdot,x,\cdot)|^{1-\gamma}\right\Vert _{L_{H}^{2}}^{2}\\
 & = & |x-y|^{2\gamma}\int_{[0,T]^{2}}\!\!\int_{D^{2}}\left|\int_{0}^{1}\frac{\partial}{\partial x}p(t-r,y+a(x-y),z)da\right|^{\gamma}|p(t-r,x-z)|^{1-\gamma}\\
 &  & \times\Psi_{h}(r,\bar{r},z,\bar{z})\left|\int_{0}^{1}\frac{\partial}{\partial x}p(t-\bar{r},y+a(x-y),\bar{z})da\right|^{\gamma}|p(t-\bar{r},x-\bar{z})|^{1-\gamma}dzd\bar{z}drd\bar{r}\\
 & \leq & C(h_{1},h_{2},\gamma)|x-y|^{2\gamma}\\
 &  & \times\left(\int_{0}^{T}\left(\int_{D}\left(\left|\int_{0}^{1}\frac{\partial}{\partial x}p(t-r,y+a(x-y),z)da\right|^{\gamma}|p(t-r,x-z)|^{1-\gamma}\right)^{\frac{1}{h_{2}}}dz\right)^{\frac{h_{2}}{h_{1}}}dr\right)^{2h_{1}}\\
 & \leq & C(h_{1},h_{2},\gamma)|x-y|^{2\gamma}\int_{0}^{1}\left(\int_{0}^{T}(t-r)^{\frac{-\gamma-\frac{1}{2}(1-\gamma)}{h_{1}}}(t-r)^{\frac{h_{2}}{2h_{1}}}dr\right)^{2h_{1}}da\\
 & \leq & C(h_{1},h_{2},\gamma)|x-y|^{2\gamma}\left(\int_{0}^{T}(t-r)^{\frac{h_{2}-1-\gamma}{2h_{1}}}dr\right)^{2h_{1}}\\
 & \leq & C(h_{1},h_{2},\gamma)|x-y|^{2\gamma}\text{.}
\end{eqnarray*}
Similarly, we have
\[
I_{32}\leq C(T,h_{1},h_{2})|x-y|^{2\gamma}\text{.}
\]
Therefore we deduce that
\begin{eqnarray}
I_{3}\leq C(T,h_{1},h_{2})|x-y|^{2\gamma}\text{,}\label{q312}
\end{eqnarray}
hence
\[
\Bbb{E}(u(t,x)-u(t,y))^{2}\leq C|x-y|^{2\nu_{1}}
\]
for $\nu_{1}\in(0,\min\{2h_{1}+h_{2}-1,1\})=(0,1)$.

By a similar argument as above, we can get
\[
\Bbb{E}(u(t,y)-u(s,y))^{2}\leq C|t-s|^{2\mu_{1}}
\]
for $\mu_{1}\in(0,\frac{1}{2}\min\{2h_{1}+h_{2}-1,1\})=(0,\frac{1}{2})$.

That is, $u(t,x)$ is $\mu_{1}$-Hölder continuous in $t$ and $\nu_{1}$-Hölder
continuous in $x$, where $\mu_{1}\in(0,\frac{1}{2})$ and $\nu_{1}\in(0,1)$.

On the other hand, for any $\theta>0$, we recall
\[
u^{\theta}(t,x)=\frac{1}{\theta}(u(t,x+\theta)-u(t,x)),
\]
and $u^{\theta}(t,x)\in\Bbb{S}$, and
\begin{eqnarray}
u^{\theta}(t,x) & = & \int_{D}\frac{1}{\theta}\left(p(t,x+\theta,z)-p(t,x,z)\right)u_{0}(z)dz\nonumber \\
 &  & +\int_{0}^{t}\int_{D}\frac{1}{\theta}\left(p(t-s,x+\theta,z)-p(t-s,x,z)\right)g(u(s,z),u^{\theta}(s,z))dzds\nonumber \\
 &  & +\int_{0}^{t}\int_{D}\frac{1}{\theta}\left(p(t-s,x+\theta,z)-p(t-s,x,z)\right)B^{H}(ds,dz)\nonumber \\
 & = & \int_{D}\int_{0}^{1}\frac{\partial}{\partial x}p(t,x+a\theta,z)u_{0}(z)dadz\nonumber \\
 &  & +\int_{0}^{t}\int_{D}\int_{0}^{1}\frac{\partial}{\partial x}p(t-s,x+a\theta,z)g(u(s,z),u^{\theta}(s,z))dadzds\nonumber \\
 &  & +\int_{0}^{t}\int_{D}\int_{0}^{1}\frac{\partial}{\partial x}p(t-s,x+a\theta,z)daB^{H}(ds,dz).\label{q12}
\end{eqnarray}
Thus
\begin{eqnarray*}
 &  & u^{\theta}(t,x)-u^{\theta}(t,y)\\
 & = & \int_{D}\left(\ \int_{0}^{1}\frac{\partial}{\partial x}p(t,x+a\theta,z)da-\int_{0}^{1}\frac{\partial}{\partial x}p(t,y+a\theta,z)da\right)u_{0}(z)dz\\
 &  & +\int_{0}^{t}\int_{D}\left(\int_{0}^{1}\frac{\partial}{\partial x}p(t-r,x+a\theta,z)da-\int_{0}^{1}\frac{\partial}{\partial x}p(t-r,y+a\theta,z)da\right)g(u(r,z),u^{\theta}(r,z))dzdr\\
 &  & +\int_{0}^{t}\int_{D}\left(\int_{0}^{1}\frac{\partial}{\partial x}p(t-r,x+a\theta,z)da-\int_{0}^{1}\frac{\partial}{\partial x}p(t-r,y+a\theta,z)da\right)B^{H}(dr,dz).
\end{eqnarray*}
By Hypothesis \ref{hypothesis 2} and Lemma \ref{holder on u0}, one
gets
\begin{eqnarray}
 &  & \Bbb{E}\left(\int_{D}\left(\int_{0}^{1}\frac{\partial}{\partial x}p(t,x+a\theta,z)da-\int_{0}^{1}\frac{\partial}{\partial x}p(t,y+a\theta,z)da\right)u_{0}(z)dz\right)^{2}\label{holder on u' of x}\\
 & = & \Bbb{E}\left(\int_{0}^{1}\left(-\int_{D}\left(p(t,x+a\theta,z)-p(t,y+a\theta,z)\right)u_{0}^{^{\prime}}(z)dz\right)da\right)^{2}\nonumber \\
 & \leq & C|x-y|^{2\kappa}.\nonumber
\end{eqnarray}

Moreover
\begin{eqnarray*}
 &  & \Bbb{E}\left(\int_{0}^{t}\int_{D}\left(\int_{0}^{1}\frac{\partial}{\partial x}p(t-r,x+a\theta,z)da-\int_{0}^{1}\frac{\partial}{\partial x}p(t-r,y+a\theta,z)da\right)g(u(r,z),u^{\theta}(r,z))dzdr\right)^{2}\\
 & \leq & \int_{0}^{t}\int_{D}\left|\int_{0}^{1}\frac{\partial}{\partial x}p(t-r,x+a\theta,z)da-\int_{0}^{1}\frac{\partial}{\partial x}p(t-r,y+a\theta,z)da\right|dzdr\\
 &  & \times\int_{0}^{t}\int_{D}\left|\int_{0}^{1}\frac{\partial}{\partial x}p(t-r,x+a\theta,z)da-\int_{0}^{1}\frac{\partial}{\partial x}p(t-r,y+a\theta,z)da\right|\Bbb{E}(g(u(r,z),u^{\theta}(r,z)))^{2}dzdr\\
 & \leq & C\left(\int_{0}^{t}\int_{D}\left|\int_{0}^{1}\frac{\partial}{\partial x}p(t-r,x+a\theta,z)da-\int_{0}^{1}\frac{\partial}{\partial x}p(t-r,y+a\theta,z)da\right|dzdr\right)^{2}\\
 & = & C\left\{ \int_{0}^{t}\int_{D}\left|\int_{0}^{1}\frac{\partial}{\partial x}p(t-r,x+a\theta,z)da-\int_{0}^{1}\frac{\partial}{\partial x}p(t-r,y+a\theta,z)da\right|^{\varrho}\right.\\
 &  & \times\left.\left|\int_{0}^{1}\frac{\partial}{\partial x}p(t-r,x+a\theta,z)da-\int_{0}^{1}\frac{\partial}{\partial x}p(t-r,y+a\theta,z)da\right|^{1-\varrho}dzdr\right\} ^{2}\\
 & \leq & C|x-y|^{2\varrho}\left\{ \int_{0}^{t}\int_{D}\left|\int_{0}^{1}\int_{0}^{1}\frac{\partial^{2}}{\partial x^{2}}p(t-r,y+b(x-y)+a\theta,z)dadb\right|^{\varrho}\right.\\
 &  & \times\left.\left|\int_{0}^{1}\frac{\partial}{\partial x}p(t-r,x+a\theta,z)da-\int_{0}^{1}\frac{\partial}{\partial x}p(t-r,y+a\theta,z)da\right|^{1-\varrho}dzdr\right\} ^{2}\\
 & \leq & C|x-y|^{2\varrho}\left\{ \left[\int_{0}^{t}\int_{D}\left|\int_{0}^{1}\int_{0}^{1}\frac{\partial^{2}}{\partial x^{2}}p(t-r,y+b(x-y)+a\theta,z)dadb\right|^{\varrho}\right.\right.\\
 &  & \times\left.\left|\int_{0}^{1}\frac{\partial}{\partial x}p(t-r,x+a\theta,z)da\right|^{1-\varrho}dzdr\right]^{2}\\
 &  & +\left.\left[\int_{0}^{t}\int_{D}\left|\int_{0}^{1}\int_{0}^{1}\frac{\partial^{2}}{\partial x^{2}}p(t-r,y+b(x-y)+a\theta,z)dadb\right|^{\varrho}\left|\int_{0}^{1}\frac{\partial}{\partial x}p(t-r,y+a\theta,z)da\right|^{1-\varrho}dzdr\right]^{2}\right\} ,
\end{eqnarray*}
where $\varrho\in(0,1)$.

While, from Lemma \ref{estimation on P} and Fubini theorem,
\begin{eqnarray*}
 &  & \int_{0}^{t}\int_{D}\left|\int_{0}^{1}\int_{0}^{1}\frac{\partial^{2}}{\partial x^{2}}p(t-r,y+b(x-y)+a\theta,z)dadb\right|^{\varrho}\left|\int_{0}^{1}\frac{\partial}{\partial x}p(t-r,x+a\theta,z)da\right|^{1-\varrho}dzdr\\
 & \leq & C\int_{0}^{t}\int_{D}\int_{0}^{1}\int_{0}^{1}\int_{0}^{1}\left|\frac{\partial^{2}}{\partial x^{2}}p(t-r,y+b(x-y)+a\theta,z)\right|^{\varrho}\left|\frac{\partial}{\partial x}p(t-r,x+c\theta,z)\right|^{1-\varrho}dadbdcdzdr\\
 & \leq & C\int_{0}^{1}\int_{0}^{1}\int_{0}^{1}\left(\int_{0}^{t}\int_{D}\left|(t-r)^{-\frac{3}{2}}e^{-\frac{(y+b(x-y)+a\theta-z)^{2}}{4t}}\right|^{\varrho}\left|(t-r)^{-1}e^{-\frac{(x+c\theta-z)^{2}}{4(t-r)}}\right|^{1-\varrho}dzdr\right)dadbdc\\
 & \leq & C\int_{0}^{1}\int_{0}^{1}\int_{0}^{1}\left(\int_{0}^{t}\int_{D}(t-r)^{-1-\frac{1}{2}\varrho}e^{-\frac{(y+b(x-y)+a\theta-z)^{2}}{4(t-r)}}drdz\right)dadbdc\\
 & \leq & C\int_{0}^{t}(t-r)^{-\frac{1}{2}-\frac{1}{2}\varrho}dr\\
 & < & \infty,
\end{eqnarray*}
with $-\frac{1}{2}-\frac{1}{2}\varrho>-1$ as $\varrho<1$.

Similarly, by using the argument as in the proof of (\ref{q312}),
we have
\begin{eqnarray*}
 &  & \Bbb{E}\left(\int_{0}^{t}\int_{D}\left(\int_{0}^{1}\frac{\partial}{\partial x}p(t-r,x+a\theta,z)da-\int_{0}^{1}\frac{\partial}{\partial x}p(t-r,y+a\theta,z)da\right)B^{H}(dr,dz)\right)^{2}\\
 & = & \left\Vert \int_{0}^{1}\frac{\partial}{\partial x}p(t-\cdot,x+a\theta,\cdot)da-\int_{0}^{1}\frac{\partial}{\partial x}p(t-\cdot,y+a\theta,\cdot)da\right\Vert _{L_{H}^{2}}^{2}\\
 & = & \left\Vert \left|\int_{0}^{1}\frac{\partial}{\partial x}p(t-\cdot,x+a\theta,\cdot)da-\int_{0}^{1}\frac{\partial}{\partial x}p(t-\cdot,y+a\theta,\cdot)da\right|^{\gamma^{^{\prime}}}\right.\\
 &  & \times\left.\left|\int_{0}^{1}\frac{\partial}{\partial x}p(t-\cdot,x+a\theta,\cdot)da-\int_{0}^{1}\frac{\partial}{\partial x}p(t-\cdot,y+a\theta,\cdot)da\right|^{1-{\gamma^{^{\prime}}}}\right\Vert _{L_{H}^{2}}^{2}\\
 & \leq & C(h_{1},h_{2},\gamma)|x-y|^{2\gamma^{^{\prime}}}
\end{eqnarray*}
where $\gamma^{^{\prime}}\in(0,\min\{\frac{2h_{1}+h_{2}-1}{2},1\})=(0,\frac{2h_{1}+h_{2}-1}{2})$.

Putting together the estimates above, we deduce that
\begin{equation}
\Bbb{E}(u^{\theta}(t,x)-u^{\theta}(t,y))^{2}\leq C|x-y|^{2\nu_{2}}\label{q24}
\end{equation}
where $\nu_{2}\in(0,\min\{\kappa,\frac{2h_{1}+h_{2}-1}{2},1\})=(0,\min\{\kappa,\frac{2h_{1}+h_{2}-1}{2}\})$.

Next let us deal with the difference
\begin{eqnarray*}
 &  & u^{\theta}(t,x)-u^{\theta}(s,x)\\
 & = & \int_{D}\left(\int_{0}^{1}\frac{\partial}{\partial x}p(t,x+a\theta,z)da-\int_{0}^{1}\frac{\partial}{\partial x}p(s,x+a\theta,z)da\right)u_{0}(z)dz\\
 &  & +\int_{s}^{t}\int_{D}\int_{0}^{1}\frac{\partial}{\partial x}p(t-r,x+a\theta,z)g(u(r,z),u^{\theta}(r,z))dadzdr\\
 &  & +\int_{0}^{s}\int_{D}\left(\int_{0}^{1}\frac{\partial}{\partial x}p(t-r,x+a\theta,z)da-\int_{0}^{1}\frac{\partial}{\partial x}p(s-r,x+a\theta,z)da\right)g(u(r,z),u^{\theta}(r,z))dzdr\\
 &  & +\int_{0}^{t}\int_{D}\left(\int_{0}^{1}\frac{\partial}{\partial x}p(t-r,x+a\theta,z)da-\int_{0}^{1}\frac{\partial}{\partial x}p(s-r,x+a\theta,z)da\right)B^{H}(dr,dz).
\end{eqnarray*}
Using the same approach to (\ref{holder on u' of x}), together with
Lemma \ref{holder on u0} and the Fubini theorem, we obtain
\[
\Bbb{E}\left[\left(\int_{0}^{1}\frac{\partial}{\partial x}p(t,x+a\theta,z)da-\int_{0}^{1}\frac{\partial}{\partial x}p(s,x+a\theta,z)da\right)u_{0}(z)dz\right]^{2}\leq C|t-s|^{\kappa}\text{.}
\]
On the other hand, by Lemma \ref{estimation on P},
\[
\left|\frac{\partial^{2}}{\partial x\partial t}p(t,x,y)\right|\leq Ct^{-2}e^{-\frac{(x-y)^{2}}{4t}}.
\]
So that
\[
\Bbb{E}\left(\int_{s}^{t}\int_{D}\int_{0}^{1}\frac{\partial}{\partial x}p(t-r,x+a\theta,z)g(u(r,z),u^{\theta}(r,z))dadzdr\right)^{2}\leq C|t-s|^{2\sigma}\text{,}
\]
\begin{eqnarray*}
 &  & \Bbb{E}\left(\int_{0}^{s}\int_{D}\int_{0}^{1}\left(\frac{\partial}{\partial x}p(t-r,x+a\theta,z)-\frac{\partial}{\partial x}p(s-r,x+a\theta,z)\right)g(u(r,z),u^{\theta}(r,z))dadzdr\right)^{2}\\
 & \leq & C|t-s|^{2\sigma}
\end{eqnarray*}
and
\begin{eqnarray*}
 &  & \Bbb{E}\left(\int_{0}^{1}\left(\frac{\partial}{\partial x}p(t-r,x+a\theta,z)-\frac{\partial}{\partial x}p(s-r,x+a\theta,z)\right)daB^{H}(dr,dz)\right)^{2}\\
 & \leq & C(T,h_{1},h_{2})|t-s|^{2\iota}\text{,}
\end{eqnarray*}
where $\sigma\in(0,\frac{1}{2})$ and $\iota\in\frac{2h_{1}+h_{2}-1}{3}$.
Therefore
\[
\Bbb{E}(u^{\theta}(t,x)-u^{\theta}(s,x))^{2}\leq C|t-s|^{2\mu_{2}},
\]
where
\[
\mu_{2}\in\left\{ 0,\min\left(\frac{\kappa}{2},\frac{1}{2},\frac{2h_{1}+h_{2}-1}{3}\right)\right\} =\left\{ 0,\min\left(\frac{\kappa}{2},\frac{2h_{1}+h_{2}-1}{3}\right)\right\} \text{.}
\]

Thus we finish the proof of the theorem.

\section{Well Solvability of SPDE}

\setcounter{equation}{0} \global\long\def\theequation{4.\arabic{equation}}

In this part, we study SPDE (\ref{EQ}). Let $v(t,x)=\frac{\partial}{\partial x}u(t,x)$.
Then the SPDE (\ref{EQ}) has the following equivalence expression:
\begin{equation}
\left\{ \begin{array}{ll}
\frac{\partial u}{\partial t}(t,x)=\frac{1}{2}\Delta u(t,x)+g(u(t,x),v(t,x))+\dot{B}^{H},\\
u(t,0)=0,\\
u(0,x)=u_{0}(x),
\end{array}\right.\label{EQ uv}
\end{equation}
which in turn means that the pair $(u(t,x),v(t,x))$ satisfies the
coupled stochastic integral system:
\begin{equation}
\left\{ \begin{array}{ll}
u(t,x)=\int_{D}p(t,x,y)u_{0}(y)dy+\int_{0}^{t}\int_{D}p(t-s,x,y)g(u(s,y),v(s,y))dyds\\
\ \ \ \ \ \ \ \ +\int_{0}^{t}\int_{D}p(t-s,x,y)B^{H}(ds,dy),\\
v(t,x)=\int_{D}\frac{\partial}{\partial x}p(t,x,y)u_{0}(y)dy+\int_{0}^{t}\int_{D}\frac{\partial}{\partial x}p(t-s,x,y)g(u(s,y),v(s,y))dyds\\
\ \ \ \ \ \ \ \ +\int_{0}^{t}\int_{D}\frac{\partial}{\partial x}p(t-s,x,y)B^{H}(ds,dy).
\end{array}\right.\label{EQ u and v}
\end{equation}

\subsection{Existence}

%\subsection{Uniformly convergence}\label{sec 7}
%\setcounter{equation}{0}
%\renewcommand{\theequation}{7.\arabic{equation}}

\begin{theorem} Suppose the assumptions $H_{h_{1},h_{2}}$, $H_{u_{0}}$
and $H_{g}$ hold, then SPDE (\ref{EQ}) possesses a solution in $\Bbb{S}$.
\end{theorem}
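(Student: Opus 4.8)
The plan is to execute the ``second step'' announced in the Introduction. For every $\theta\neq0$ let $(u(\cdot,\cdot,\theta),u^{\theta}(\cdot,\cdot,\theta))$ be the unique solution of the non-local SPDE (\ref{EQ u}) supplied by Theorem \ref{thq1}, written in the mild form of (\ref{q12}) (with the Picard iterate there replaced by the true solution). I would show that this family is Cauchy in $\Bbb{S}$, uniformly on $[0,T]$, as $\theta\to0$, and that its limit $(u,v)$ satisfies the mild system (\ref{EQ u and v}) with $v=\frac{\partial}{\partial x}u$; by definition this exhibits a solution of (\ref{EQ}) in $\Bbb{S}$. The increment kernel $\frac1\theta\bigl(p(\tau,x+\theta,y)-p(\tau,x,y)\bigr)=\int_0^1\frac{\partial}{\partial x}p(\tau,x+a\theta,y)\,da$ is always handled through Remark \ref{remark on p'} and the Appendix estimate in Lemma \ref{estimation on P}.

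First I would record that the bounds are uniform in $\theta$. Re-running the a priori estimates (\ref{q21}) and (\ref{q11}) for the solution itself (their constants come only from Remark \ref{remark on p'}, Lemma \ref{estimation on P} and Lemma \ref{estimation on fractionl integral}, none of which sees $\theta$), the function $\Lambda_\theta(t)=\sup_x\Bbb{E}|u(t,x,\theta)|^{2}+\sup_x\Bbb{E}|u^{\theta}(t,x,\theta)|^{2}$ satisfies $\Lambda_\theta(t)\le C+C\int_0^t\bigl(1+(t-s)^{-1/2}\bigr)\Lambda_\theta(s)\,ds$, so Henry's inequality (Lemma~1.1 of \cite{He}) yields $\sup_{0<|\theta|\le1}\sup_{t\le T}\Lambda_\theta(t)<\infty$; in particular the drift $g(u(\cdot,\cdot,\theta),u^{\theta}(\cdot,\cdot,\theta))$ has an $L^{2}$-bound uniform in $(\theta,t,x)$.

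The core of the argument is the Cauchy estimate. Fix small $\theta,\eta$ and put
\[
\Phi_{\theta,\eta}(t)=\sup_x\Bbb{E}|u(t,x,\theta)-u(t,x,\eta)|^{2}+\sup_x\Bbb{E}|u^{\theta}(t,x,\theta)-u^{\eta}(t,x,\eta)|^{2}.
\]
Subtracting the mild equations for $u(\cdot,\cdot,\theta)$ and $u(\cdot,\cdot,\eta)$, the initial term and the stochastic convolution $\int_0^t\!\int_D p(t-s,x,y)B^{H}(ds,dy)$ cancel, and the drift is controlled by Hypothesis \ref{hypothesis 3} and Lemma \ref{estimation on integral} with $\rho=\tfrac12$, giving $\sup_x\Bbb{E}|u(t,x,\theta)-u(t,x,\eta)|^{2}\le C\int_0^t\Phi_{\theta,\eta}(s)\,ds$. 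For the increment quotients one subtracts the two instances of (\ref{q12}); the nonlinear and the stochastic terms now carry the difference of shifted kernels
\[
\int_0^1\!\tfrac{\partial}{\partial x}p(\tau,x+a\theta,y)\,da-\int_0^1\!\tfrac{\partial}{\partial x}p(\tau,x+a\eta,y)\,da=(\theta-\eta)\!\int_0^1\!\!\int_0^1 a\,\tfrac{\partial^{2}}{\partial x^{2}}p\bigl(\tau,x+a\eta+ab(\theta-\eta),y\bigr)\,db\,da,
\]
which contains a second spatial derivative of $p$, i.e.\ the \emph{forbidden} exponent $\rho=\tfrac32$ in Lemma \ref{estimation on integral}. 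I would resolve this exactly as in Section \ref{regularity}: bound the difference of kernels by $|\theta-\eta|^{\gamma}\bigl|\tfrac{\partial^{2}}{\partial x^{2}}p\bigr|^{\gamma}\bigl|\tfrac{\partial}{\partial x}p\bigr|^{1-\gamma}$ with a small $\gamma\in(0,\min\{1,\,2h_{1}+h_{2}-2\})$, so that by Lemma \ref{estimation on P} it is $\le C|\theta-\eta|^{\gamma}\tau^{-(1+\gamma/2)}e^{-c(x-y)^{2}/\tau}$ with effective $\rho=1+\tfrac\gamma2<\tfrac32$. Then Lemma \ref{estimation on integral} bounds the nonlinear piece (time weight $(t-s)^{-1/2-\gamma/2}$, integrable since $\gamma<1$), Lemma \ref{estimation on fractionl integral} bounds the $B^{H}$-piece (the $L^{1/h_{2}}(D)$-norm of the kernel difference behaves like $|\theta-\eta|^{\gamma}\tau^{h_{2}/2-1-\gamma/2}$, and the ensuing time integral converges precisely because $\gamma<2h_{1}+h_{2}-2$), and the initial-data term is treated by integration by parts against $u_{0}'$ as in (\ref{holder on u' of x}). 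Combined with the uniform $L^{2}$-bound on the drift of the $\eta$-equation from the previous step, this gives
\[
\Phi_{\theta,\eta}(t)\le C|\theta-\eta|^{2\gamma}+C\int_0^t\bigl(1+(t-s)^{-1/2-\gamma/2}\bigr)\Phi_{\theta,\eta}(s)\,ds,
\]
and Lemma~1.1 of \cite{He} forces $\Phi_{\theta,\eta}(t)\le C_T|\theta-\eta|^{2\gamma}$. Hence $\{u(\cdot,\cdot,\theta)\}$ and $\{u^{\theta}(\cdot,\cdot,\theta)\}$ converge in $\Bbb{S}$, uniformly on $[0,T]$; call the limits $u$ and $v$.

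It remains to pass to the limit $\theta\to0$ in (\ref{EQ u and v}). In the equation for $u(\cdot,\cdot,\theta)$ the initial term and the stochastic convolution are $\theta$-free, while the drift converges by Hypothesis \ref{hypothesis 3}, the uniform second-moment bounds, Lemma \ref{estimation on integral} and dominated convergence; thus $u$ satisfies the first line of (\ref{EQ u and v}). In (\ref{q12}) one has $\int_0^1\frac{\partial}{\partial x}p(\tau,x+a\theta,y)\,da\to\frac{\partial}{\partial x}p(\tau,x,y)$ with a $\theta$-independent domination (Lemma \ref{estimation on P}), so every term converges and $v$ satisfies the second line of (\ref{EQ u and v}). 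Since that second line is the termwise $x$-derivative of the first — the differentiation under the integral sign being legitimate for the deterministic terms by the bounds on $\frac{\partial}{\partial x}p$ and for the stochastic convolution by a stochastic-Fubini argument based on the estimate that produced (\ref{q9}) — we get $v=\frac{\partial}{\partial x}u$, so $u\in\Bbb{S}$ solves (\ref{EQ}). The main obstacle is the Cauchy estimate for the increment quotients: the difference of the shifted kernels unavoidably produces $\frac{\partial^{2}}{\partial x^{2}}p$, i.e.\ the critical exponent $\rho=\tfrac32$ of Lemma \ref{estimation on integral}, and one must trade a fractional power $\gamma<1$ of it via interpolation, both to regain integrability in time and to generate the $|\theta-\eta|^{2\gamma}$ decay — the admissible window for $\gamma$ being nonempty exactly because the standing hypothesis gives $2h_{1}+h_{2}>2$.
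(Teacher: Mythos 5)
Your proposal is correct and follows essentially the same route as the paper: take the family of non-local solutions from Theorem \ref{thq1}, prove a Cauchy estimate in $\theta$ by splitting the difference into the initial-data, drift and stochastic-convolution pieces, handle the difference of shifted kernels by interpolating a fractional power $\gamma$ of $\frac{\partial^{2}}{\partial x^{2}}p$ against $\frac{\partial}{\partial x}p$ (exactly the paper's $B_{1},B_{21},B_{22},B_{3}$ decomposition and its appeal to the regularity estimates of Section \ref{regularity}), and close with the singular Gronwall inequality before passing to the limit in the mild equations. If anything, you are more careful than the paper on two points it glosses over --- tracking the $\theta$-dependence of the first component $u(\cdot,\cdot,\theta)$ in the Cauchy functional, and justifying the identification $v=\frac{\partial}{\partial x}u$ at the end --- and your admissible window $\gamma<2h_{1}+h_{2}-2$ for the stochastic term is the sharp one.
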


In fact, we only need to show that SPDE ({\ref{EQ uv}}) possesses
a solution $(\bar{u}(t,x),\bar{v}(t,x))$, where $\bar{u}(t,x)\in\Bbb{S}$
and $\bar{v}(t,x)\in\Bbb{S}$.

As we have demonstrated, if$(u(t,x),u^{\theta}(t,x))$ is the solution
of the equation (\ref{EQ u}), then $u(t,x)\in\Bbb{S}$ and $u^{\theta}(t,x)\in\Bbb{S}$.

Let us consider
\[
\Bbb{E}(u^{\theta_{1}}(t,x)-u^{\theta_{2}}(t,x))^{2}
\]
as $\theta_{1}\rightarrow0$ and $\theta_{2}\rightarrow0.$

According to (\ref{q11}), for $\tau_{1},\tau_{2}\in(0,1)$, we may
write
\begin{eqnarray*}
 &  & u^{\theta_{1}}(t,x)-u^{\theta_{2}}(t,x)\\
 & = & \int_{D}\left(\int_{0}^{1}\frac{\partial}{\partial x}p(t,x+a\theta_{1},y)da-\int_{0}^{1}\frac{\partial}{\partial x}p(t,x+a\theta_{2},y)da\right)u_{0}(y)dy\\
 &  & +\int_{0}^{t}\int_{D}\left\{ \int_{0}^{1}\frac{\partial}{\partial x}p(t-s,x+a\theta_{1},y)g(u(s,y),u^{\theta_{1}}(s,y))da\right.\\
 &  & \ \ \ \ \ \ \ \ \ \ \ \ \ \ \ \ -\left.\int_{0}^{1}\frac{\partial}{\partial x}p(t-s,x+a\theta_{2},y)g(u(s,y),u^{\theta_{2}}(s,y))da\right\} dyds\\
 &  & +\int_{0}^{t}\int_{D}\left\{ \int_{0}^{1}\frac{\partial}{\partial x}p(t-s,x+a\theta_{1},y)da-\int_{0}^{1}\frac{\partial}{\partial x}p(t-s,x+a\theta_{2},y)da\right\} B^{H}(ds,dy)\\
 & := & B_{1}+B_{2}+B_{3}.
\end{eqnarray*}
Let us derive estimates for $B_{1}$, $B_{2}$ and $B_{3}$. Note
that
\begin{eqnarray*}
 &  & \int_{0}^{1}\frac{\partial}{\partial x}p(t-s,x+a\theta_{1},y)g(u(s,y),u^{\theta_{1}}(s,y))da\\
 &  & -\int_{0}^{1}\frac{\partial}{\partial x}p(t-s,x+a\theta_{2},y)g(u(s,y),u^{\theta_{2}}(s,y))da\\
 & = & \int_{0}^{1}\frac{\partial}{\partial x}p(t-s,x+a\theta_{1},y)da\left(g(u(s,y),u^{\theta_{1}}(s,y))-g(u(s,y),u^{\theta_{2}}(s,y))\right)\\
 &  & +\left(\int_{0}^{1}\frac{\partial}{\partial x}p(t-s,x+a\theta_{1},y)da-\int_{0}^{1}\frac{\partial}{\partial x}p(t-s,x+a\theta_{2},y)da\right)g(u(s,y),u^{\theta_{2}}(s,y))
\end{eqnarray*}
and it follows that
\begin{eqnarray*}
B_{2} & = & \int_{0}^{t}\int_{D}\int_{0}^{1}\frac{\partial}{\partial x}p(t-s,x+a\theta_{1},y)\left(g(u(s,y),u^{\theta_{1}}(s,y))-g(u(s,y),u^{\theta_{2}}(s,y))\right)dadyds\\
 &  & +\int_{0}^{t}\int_{D}\left(\int_{0}^{1}\frac{\partial}{\partial x}p(t-s,x+a\theta_{1},y)da-\int_{0}^{1}\frac{\partial}{\partial x}p(t-s,x+a\theta_{2},y)da\right)g(u(s,y),u^{\theta_{2}}(s,y))dyds\\
 & := & B_{21}+B_{22}.
\end{eqnarray*}
By the same argument as in the proof of (\ref{q24}), we may conclude
that
\[
B_{1}\leq C|\theta_{1}-\theta_{2}|^{2\kappa},
\]
\[
B_{22}\leq C|\theta_{1}-\theta_{2}|^{2\varrho},
\]
and
\[
B_{3}\leq C|\theta_{1}-\theta_{2}|^{2\gamma^{^{\prime}}}.
\]
Again by Lemma \ref{estimation on P}, one gets
\[
B_{21}\leq C\int_{0}^{t}(t-s)^{-\frac{1}{2}}\sup_{y}\Bbb{E}\left(u^{\theta_{1}}(s,y)-u^{\theta_{2}}(s,y)\right)^{2}ds.
\]
Combining the estimates above, we have
\begin{eqnarray}
 &  & \Bbb{E}\left(u^{\theta_{1}}(t,x)-u^{\theta_{2}}(t,x)\right)^{2}\nonumber \\
 & \leq & C\left|\theta_{1}-\theta_{2}\right|^{\varpi}+C\int_{0}^{t}(t-s)^{-\frac{1}{2}}\sup_{y}\Bbb{E}\left(u^{\theta_{1}}(s,y)-u^{\theta_{2}}(s,y)\right)^{2}ds,\label{h1-h2}
\end{eqnarray}
where $\varpi=\min\{2\kappa,2\varrho,2\gamma^{^{\prime}}\}$.

Sending $\theta_{1}\rightarrow0$ and $\theta_{2}\rightarrow0$ and
using Gronwall inequality, we get
\[
\sup_{x}\Bbb{E}\left(u^{\theta_{1}}(t,x)-u^{\theta_{2}}(t,x)\right)^{2}\rightarrow0,\ \ t\in\lbrack0,T].
\]
Therefore, $\{u^{\theta}(t,x)\}_{\theta}$ is a Cauchy sequence on
$\Bbb{S}$. The limit of these sequences exists (also belong to $\Bbb{S}$),
which is $\bar{v}(t,x)$.

Finally letting $\theta\rightarrow0$, we denote the limit of $(u(t,x),u^{\theta}(t,x))$
by $(\bar{u}(t,x),\bar{v}(t,x))$, which is the solution of the SPDE
(\ref{EQ uv}). The proof of this theorem is thus complete.

\subsection{Uniqueness}

\begin{theorem} Suppose the assumptions $H_{h_{1},h_{2}}$, $H_{u_{0}}$
and $H_{g}$ hold, then SPDE (\ref{EQ}) has a unique solution in
$\Bbb{S}$. \end{theorem}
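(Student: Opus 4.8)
The plan is to follow the uniqueness argument already used for the non-local SPDE in Section~3.1, now applied to the coupled integral system (\ref{EQ u and v}) for the pair $(u,v)$ with $v=\frac{\partial}{\partial x}u$. Suppose $(u,v)$ and $(\tilde u,\tilde v)$ are two solutions of (\ref{EQ u and v}) lying in $\Bbb{S}\times\Bbb{S}$. They share the same initial datum $u_{0}$ and are driven by the same fractional noise $B^{H}$, so upon subtraction the initial-value terms $\int_{D}p(t,x,y)u_{0}(y)dy$ and $\int_{D}\frac{\partial}{\partial x}p(t,x,y)u_{0}(y)dy$ as well as the two stochastic convolutions cancel, leaving
\begin{align*}
u(t,x)-\tilde u(t,x) &= \int_{0}^{t}\!\!\int_{D}p(t-s,x,y)\bigl[g(u(s,y),v(s,y))-g(\tilde u(s,y),\tilde v(s,y))\bigr]dyds,\\
v(t,x)-\tilde v(t,x) &= \int_{0}^{t}\!\!\int_{D}\frac{\partial}{\partial x}p(t-s,x,y)\bigl[g(u(s,y),v(s,y))-g(\tilde u(s,y),\tilde v(s,y))\bigr]dyds.
\end{align*}

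First I would bound the $u$-difference. By Hypothesis~\ref{hypothesis 3} the bracket is dominated by $L(|u-\tilde u|+|v-\tilde v|)$, and since $|p(t,x,y)|\le Ct^{-1/2}e^{-C(x-y)^{2}/t}$ (so the exponent is $\rho=\tfrac12<\tfrac32$), Lemma~\ref{estimation on integral} gives, with $\Gamma(s):=\sup_{y}\Bbb{E}|u(s,y)-\tilde u(s,y)|^{2}+\sup_{y}\Bbb{E}|v(s,y)-\tilde v(s,y)|^{2}$,
\[
\sup_{x}\Bbb{E}|u(t,x)-\tilde u(t,x)|^{2}\le C\int_{0}^{t}\Gamma(s)\,ds.
\]
For the $v$-difference I would use the Appendix bound $|\frac{\partial}{\partial x}p(t,x,y)|\le Ct^{-1}e^{-C(x-y)^{2}/t}$ from Lemma~\ref{estimation on P}; here $\rho=1<\tfrac32$, so Lemma~\ref{estimation on integral} again applies and yields
\[
\sup_{x}\Bbb{E}|v(t,x)-\tilde v(t,x)|^{2}\le C\int_{0}^{t}(t-s)^{-1/2}\Gamma(s)\,ds.
\]

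Adding the two estimates produces the singular Gronwall inequality $\Gamma(t)\le C\int_{0}^{t}\bigl(1+(t-s)^{-1/2}\bigr)\Gamma(s)\,ds$ for $t\in[0,T]$. Since $u,\tilde u,v,\tilde v\in\Bbb{S}$ makes $\Gamma$ locally integrable in $t$ (exactly the integrability reserved for members of $\Bbb{S}$), and since $s\mapsto1+(t-s)^{-1/2}$ is integrable on $[0,t]$, Henry's Gronwall-type lemma (Lemma~1.1 in \cite{He}) forces $\Gamma\equiv0$ on $[0,T]$. Hence $u=\tilde u$ and $v=\tilde v$ in the $L^{2}$ sense, which, combined with the existence result of the previous subsection, shows that SPDE (\ref{EQ}) has a unique solution in $\Bbb{S}$.

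The only delicate point — the main obstacle — is that the $v$-component of the system carries the kernel $\frac{\partial}{\partial x}p$, whose on-diagonal singularity $t^{-1}$ is, at face value, non-integrable. The resolution is the same device used throughout Section~3: inside Lemma~\ref{estimation on integral} one applies Cauchy--Schwarz first, pulls out the finite factor $\int_{0}^{t}\!\int_{D}|\frac{\partial}{\partial x}p(t-s,x,y)|\,dyds\le C_{T}$ (as in Remark~\ref{remark on p'}), and is left with the integrable weight $(t-s)^{-1/2}$, which is compatible with the Henry--Gronwall lemma. One should also note that no extra regularity of $v$ (such as $v=\partial_{x}u$ holding pointwise) is needed: the argument treats $(u,v)$ purely as a solution pair of the integral system (\ref{EQ u and v}), so the cancellation and all the estimates above go through verbatim.
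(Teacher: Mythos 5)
Your proposal is correct and follows essentially the same route as the paper: subtract the two solutions of the integral system (\ref{EQ u and v}), use the Lipschitz hypothesis $H_{g}$ together with the kernel bounds of Lemma \ref{estimation on P} and the Cauchy--Schwarz device of Lemma \ref{estimation on integral} (with $\rho=\tfrac12$ for $p$ and $\rho=1$ for $\partial_{x}p$), and close with Henry's singular Gronwall lemma applied to $\Gamma(t)$. The only cosmetic difference is that the paper obtains the $(t-s)^{-1/2}$ weight for the $v$-component by referring back to the finite-difference estimate (\ref{h1-h2}) via $\bar v_{i}=\lim_{\theta\to0}\bar u_{i}^{\theta}$, whereas you apply Lemma \ref{estimation on integral} directly to the kernel $\partial_{x}p$ --- your version is, if anything, slightly cleaner since it treats $(u,v)$ purely as a solution of the integral system.
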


\begin{proof} Suppose $(\bar{u}_{1}(t,x),\bar{v}_{1}(t,x))$ and
$(\bar{u}_{2}(t,x),\bar{v}_{2}(t,x))$ are two solutions on the equation
(\ref{EQ u and v}). Then
\begin{eqnarray*}
\bar{u}_{1}(t,x) & = & \int_{D}p(t,x,y)u_{0}(y)dy+\int_{0}^{t}\int_{D}p(t-s,x,y)g(\bar{u}_{1}(s,y),\bar{v}_{1}(s,y))dyds\\
 &  & +\int_{0}^{t}\int_{D}p(t-s,x,y)B^{H}(ds,dy),
\end{eqnarray*}
and
\begin{eqnarray*}
\bar{u}_{2}(t,x) & = & \int_{D}p(t,x,y)u_{0}(y)dy+\int_{0}^{t}\int_{D}p(t-s,x,y)g(\bar{u}_{2}(s,y),\bar{v}_{2}(s,y))dyds\\
 &  & +\int_{0}^{t}\int_{D}p(t-s,x,y)B^{H}(ds,dy).
\end{eqnarray*}
Then
\begin{eqnarray*}
 &  & \bar{u}_{1}(t,x)-\bar{u}_{2}(t,x)\\
 & = & \int_{0}^{t}\int_{D}p(t-s,x,y)\left(g(\bar{u}_{1}(s,y),\bar{v}_{1}(s,y))-g(\bar{u}_{2}(s,y),\bar{v}_{2}(s,y))\right)dyds.
\end{eqnarray*}
Once again by using Lemma \ref{estimation on P} and Lemma \ref{estimation on integral},
\begin{eqnarray}
 &  & \sup_{x}\Bbb{E}(\bar{u}_{1}(t,x)-\bar{u}_{2}(t,x))^{2}\nonumber \\
 & \leq & \int_{0}^{t}\sup_{y}\Bbb{E}(\bar{u}_{1}(s,y)-\bar{u}_{2}(s,y))^{2}ds+\int_{0}^{t}\sup_{y}\Bbb{E}(\bar{v}_{1}(s,y)-\bar{v}_{2}(s,y))^{2}ds.\label{u1-u2}
\end{eqnarray}
On the other hand,
\begin{eqnarray*}
\bar{v}_{1}(t,x) & = & \int_{D}\frac{\partial}{\partial x}p(t,x,y)u_{0}(y)dy+\int_{0}^{t}\int_{D}\frac{\partial}{\partial x}p(t-s,x,y)g(\bar{u}_{1}(s,y),\bar{v}_{1}(s,y))dyds\\
 &  & +\int_{0}^{t}\int_{D}\frac{\partial}{\partial x}p(t-s,x,y)B^{H}(ds,dy),
\end{eqnarray*}
and
\begin{eqnarray*}
\bar{v}_{2}(t,x) & = & \int_{D}\frac{\partial}{\partial x}p(t,x,y)u_{0}(y)dy+\int_{0}^{t}\int_{D}\frac{\partial}{\partial x}p(t-s,x,y)g(\bar{u}_{2}(s,y),\bar{v}_{2}(s,y))dyds\\
 &  & +\int_{0}^{t}\int_{D}\frac{\partial}{\partial x}p(t-s,x,y)B^{H}(ds,dy).
\end{eqnarray*}

Note that
\[
\bar{v}_{i}(t,x)=\lim_{\theta\rightarrow0}\bar{u}_{i}^{\theta}(t,x)\in\Bbb{S},
\]
with $i=1,2.$ Similarly, by a similar argument for (\ref{h1-h2}),
we get
\begin{eqnarray}
 &  & \sup_{x}\Bbb{E}\left(\bar{v}_{1}(t,x)-\bar{v}_{2}(t,x)\right)^{2}\nonumber \\
 & \leq & C\int_{0}^{t}(t-s)^{-\frac{1}{2}}\sup_{y}\Bbb{E}\left(\bar{v}_{1}(s,y)-\bar{v}_{2}(s,y)\right)^{2}ds\nonumber \\
 &  & +C\int_{0}^{t}(t-s)^{-\frac{1}{2}}\sup_{y}\Bbb{E}\left(\bar{u}_{1}(s,y)-\bar{u}_{2}(s,y)\right)^{2}ds.\label{lim uh1-lim uh2}
\end{eqnarray}
Let
\[
\Lambda(t)=\sup_{x}\Bbb{E}(\bar{u}_{1}(t,x)-\bar{u}_{2}(t,x))^{2}+\sup_{x}\Bbb{E}\left(\bar{v}_{1}(t,x)-\bar{v}_{2}(t,x)\right)^{2}.
\]
Jointing with (\ref{u1-u2}) and (\ref{lim uh1-lim uh2}), we get
\[
\Lambda(t)\leq C\int_{0}^{t}\left(1+(t-s)^{-\frac{1}{2}}\right)\Lambda(s)ds.
\]
So
\[
\Lambda(t)=0,\ \ \ as\ \ \ t\in\lbrack0,T].
\]
Then
\[
(\bar{u}_{1}(t,x),\bar{v}_{1}(t,x))=(\bar{u}_{2}(t,x),\bar{v}_{2}(t,x)),\ \ \ as\ \ \ (t,x)\in\lbrack0,T]\times D,
\]
in $L^{2}$ sense. Then we get the result of this theorem. \\
\end{proof}

\subsection{Regularity}

Let $(u(t,x),v(t,x))$ be the solution of the equation (\ref{EQ u and v}).
Then $u(t,x)\in\Bbb{S}$ and $v(t,x)\in\Bbb{S}$, and $u(t,x)$ is
the solution of the equation (\ref{EQ}). By similar arguments as
in the proof of the Hölder continuity of the solution pair $(u(t,x),u^{\theta}(t,x))$
to the equation (\ref{EQ u}) in Section \ref{regularity}, one can
show the Hölder continuity of $u(t,x)$ and $v(t,x)$ which we state
as the following theorem, its proof is omitted.

\begin{theorem} \label{thq3}Assume that $H_{h_{1},h_{2}}$, $H_{u_{0}}$
and $H_{g}$ hold. Let $u(t,x)$ be the solution of the equation (\ref{EQ}).
Then $u(t,x)$ is $\mu_{1}$-Hölder continuous in $t$ and $\nu_{1}$-Hölder
continuous in $x$, where $\mu_{1}\in(0,\frac{1}{2})$ and $\nu_{1}\in(0,1)$.
Moreover, $v(t,x)=\frac{\partial}{\partial x}u(t,x)$ is $\mu_{2}$-Hölder
continuous in $t$ and $\nu_{2}$-Hölder continuous in $x$, where
$\mu_{2}\in(0,\min\{\frac{\kappa}{2},\frac{2h_{1}+h_{2}-1}{3}\})$
and $\nu_{2}\in(0,\min\{\kappa,\frac{2h_{1}+h_{2}-1}{2}\})$. \end{theorem}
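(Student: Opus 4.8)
\emph{Proof sketch.} The plan is to \emph{pass the regularity estimates of Section~\ref{regularity} to the limit $\theta\to0$}. Recall from the existence proof above that $u(\cdot,\cdot,\theta)\to u$ and $u^{\theta}\to v$ in $\Bbb S$; moreover every second‑moment bound obtained there for the increments of $u(\cdot,\cdot,\theta)$ and of $u^{\theta}$ is \emph{uniform in $\theta$}, since all the constants came from Lemma~\ref{estimation on integral}, Remark~\ref{remark on p'}, Lemma~\ref{estimation on fractionl integral}, Lemma~\ref{estimation on P} and Lemma~\ref{holder on u0}, none of which involves $\theta$ (the shift $x\mapsto x+a\theta$ does not change the $L^{p}$ norms of the Gaussian kernels, and $a$ is integrated over $[0,1]$). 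Hence the bounds $\Bbb E(u(t,x)-u(s,y))^{2}\le C(|x-y|^{2\nu_{1}}+|t-s|^{2\mu_{1}})$ and $\Bbb E(v(t,x)-v(s,y))^{2}\le C(|x-y|^{2\nu_{2}}+|t-s|^{2\mu_{2}})$, with the stated exponent ranges, will survive the limit, and the Kolmogorov continuity theorem --- after the routine upgrade to $L^{p}$ for large $p$, using that the stochastic‑convolution term is Gaussian so all its moments are comparable and a Gronwall bootstrap controls the remaining terms --- will yield the stated pathwise Hölder regularity.

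For completeness I would also record a direct proof from the mild system (\ref{EQ u and v}). For $u$ I would split, via $\Bbb E(u(t,x)-u(s,y))^{2}\le 2\Bbb E(u(t,x)-u(t,y))^{2}+2\Bbb E(u(t,y)-u(s,y))^{2}$, into the initial‑data term, the drift term and the stochastic convolution, and reproduce the estimates for $I_{1},I_{2},I_{3}$ of Section~\ref{regularity}: Lemma~\ref{holder on u0} with Hypothesis~\ref{hypothesis 2} for the initial term; Cauchy--Schwarz together with Lemma~\ref{estimation on integral}, the linear growth of $g$ from Hypothesis~\ref{hypothesis 3} and $u,v\in\Bbb S$ for the drift term; and, for the stochastic convolution, I would pass to $\|p(t-\cdot,x,\cdot)-p(t-\cdot,y,\cdot)\|_{L_{H}^{2}}^{2}$, insert the interpolation $|p(t-\cdot,x,\cdot)-p(t-\cdot,y,\cdot)|^{\gamma}|p(t-\cdot,x,\cdot)|^{1-\gamma}$ and apply Lemma~\ref{embedding theorem} and Lemma~\ref{estimation on P}, obtaining $C|x-y|^{2\gamma}$ for any $\gamma<\min\{2h_{1}+h_{2}-1,1\}=1$ and, by parabolic scaling, $C|t-s|^{2\mu_{1}}$ with $\mu_{1}<\tfrac12$. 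This settles the $\nu_{1},\mu_{1}$ assertions.

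For $v$ I would use the second equation of (\ref{EQ u and v}), i.e. the same decomposition with $\partial_{x}p$ replacing $p$. The initial‑data term is rewritten by parts as $-\int_{D}p(t,x,z)u_{0}'(z)\,dz$, so the $\kappa$‑Hölder continuity of $u_{0}'$ in Hypothesis~\ref{hypothesis 2}(3) together with Lemma~\ref{holder on u0} gives $C|x-y|^{2\kappa}$ and $C|t-s|^{\kappa}$. In the drift term a space increment produces $\partial_{x}^{2}p(t-r,x,z)\lesssim(t-r)^{-3/2}e^{-c(x-z)^{2}/(t-r)}$ and a time increment produces $\partial_{t}\partial_{x}p(t-r,x,z)\lesssim(t-r)^{-2}e^{-c(x-z)^{2}/(t-r)}$ (Lemma~\ref{estimation on P}); I would keep the time integrals convergent by the interpolation $|\cdot|^{\varrho}|\cdot|^{1-\varrho}$, so that $\int_{0}^{t}(t-r)^{-1/2-\varrho/2}\,dr<\infty$ for $\varrho<1$, getting $C|x-y|^{2\varrho}$, and similarly $C|t-s|^{2\sigma}$ for $\sigma<\tfrac12$ after separating the $\int_{s}^{t}$ piece from the $\int_{0}^{s}$ piece involving $\partial_{x}p(t-r)-\partial_{x}p(s-r)$. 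For the stochastic convolution I would again pass through the $L_{H}^{2}$ norm and interpolate with an exponent $\gamma'$ (resp. $\iota$), using Lemma~\ref{embedding theorem} and Lemma~\ref{estimation on P}; finiteness of the resulting time integrals forces $\gamma'<(2h_{1}+h_{2}-1)/2$ for a space increment and $\iota<(2h_{1}+h_{2}-1)/3$ for a time increment, and here Hypothesis~\ref{hypothesis 1}(2), namely $2h_{1}+h_{2}>2$, is exactly what makes these ranges nonempty. Collecting the exponents yields $\nu_{2}<\min\{\kappa,(2h_{1}+h_{2}-1)/2\}$ and $\mu_{2}<\min\{\kappa/2,(2h_{1}+h_{2}-1)/3\}$.

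The hardest part will be the time regularity of $v$: since $\partial_{t}\partial_{x}p$ decays only like $t^{-2}$, the naive estimates diverge, and one must interpolate both inside the kernel bounds and inside the $L_{H}^{2}$ inner product, whose double time integral carries the extra singular weight $|r-\bar r|^{2h_{1}-2}$ coming from $\Psi_{h}$; it is precisely this combined consumption of the Hölder budget --- shared between the two copies of the kernel difference and the weight --- that produces the denominator $3$ in $(2h_{1}+h_{2}-1)/3$ and that makes the hypothesis $2h_{1}+h_{2}>2$ indispensable.
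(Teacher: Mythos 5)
Your proposal is correct and matches the paper's intent: the paper in fact omits the proof of this theorem entirely, stating only that it follows ``by similar arguments'' from the regularity analysis of Section~\ref{regularity}, and your direct reproduction of the $I_{1},I_{2},I_{3}$ estimates for the mild system (\ref{EQ u and v}) (with $\partial_{x}p$ in place of $p$ for $v$) is precisely that argument, while your uniform-in-$\theta$ passage to the limit is a clean alternative justification of the same bounds. Your remark that a Kolmogorov-type upgrade to higher moments is needed for genuine pathwise H\"older continuity is, if anything, more careful than the paper, which works only at the level of second moments.
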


\section{Appendix}

\setcounter{equation}{0} \global\long\def\theequation{A.\arabic{equation}}

In this section, we review, for the convenience of the reader, a few
elementary estimates about the Green function which are used in the
paper. Recall that $p(t,x,y)$ is the fundamental solution of the
heat operator $\frac{\partial}{\partial t}-\frac{1}{2}\Delta$ on
$[0,\infty)$ subject to the Dirichlet boundary condition, given by
the following explicit formula
\[
p(t,x,y)=\frac{1}{\sqrt{2\pi t}}\left(e^{-\frac{(x-y)^{2}}{2t}}-e^{-\frac{(x+y)^{2}}{2t}}\right).
\]

\begin{lemma} \label{estimation on P} For $(t,x,y)\in[0,T]\times D\times D$,
we have
\begin{eqnarray*}
\left|p(t,x,y)\right|\leq Ct^{-\frac{1}{2}}e^{-\frac{(x-y)^{2}}{4t}},
\end{eqnarray*}
\begin{eqnarray*}
\left|\frac{\partial}{\partial x}p(t,x,y)\right|\leq Ct^{-1}e^{-\frac{(x-y)^{2}}{4t}},
\end{eqnarray*}
\begin{eqnarray*}
\left|\frac{\partial}{\partial t}p(t,x,y)\right|\leq Ct^{-\frac{3}{2}}e^{-\frac{(x-y)^{2}}{4t}},
\end{eqnarray*}
\begin{eqnarray*}
\left|\frac{\partial^{2}}{\partial x^{2}}p(t,x,y)\right|\leq Ct^{-\frac{3}{2}}e^{-\frac{(x-y)^{2}}{4t}},
\end{eqnarray*}
and
\begin{eqnarray*}
\left|\frac{\partial^{2}}{\partial x\partial t}p(t,x,y)\right|\leq Ct^{-2}e^{-\frac{(x-y)^{2}}{4t}}.
\end{eqnarray*}
\end{lemma}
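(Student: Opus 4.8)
The plan is to reduce everything to the free Gaussian kernel. Write $p(t,x,y)=G(t,x-y)-G(t,x+y)$, where $G(t,z):=(2\pi t)^{-1/2}e^{-z^{2}/(2t)}$ is the fundamental solution of $\partial_{t}-\tfrac12\Delta$ on the whole line, and estimate the two pieces separately. Because $x,y\in D=[0,\infty)$ forces $(x+y)^{2}\geq(x-y)^{2}$, any Gaussian factor $e^{-(x+y)^{2}/(4t)}$ arising from the second piece is dominated by $e^{-(x-y)^{2}/(4t)}$; hence it suffices to prove, for $z=x-y$ and for $z=x+y$, bounds of the form $|\partial^{\alpha}G(t,z)|\leq Ct^{-a}e^{-z^{2}/(4t)}$ with $a=\tfrac12,1,\tfrac32,\tfrac32,2$ for $\alpha$ corresponding to $1,\partial_{x},\partial_{t},\partial_{x}^{2},\partial_{x}\partial_{t}$ respectively. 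Note that since $\partial_{x}(x\pm y)=1$, every $x$-derivative of $G(t,x\pm y)$ equals the corresponding $z$-derivative of $G$ evaluated at $z=x\pm y$, so only the scalar function $G(t,z)$ and its $z$- and $t$-derivatives need to be handled.

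The single elementary fact doing all the work is: for each integer $k\geq0$ there is $c_{k}>0$ with $|z|^{k}e^{-z^{2}/(4t)}\leq c_{k}t^{k/2}$ for all $z\in\R$, $t>0$ (obtained by maximizing $r\mapsto r^{k}e^{-r^{2}/(4t)}$ over $r\geq0$). Combined with the splitting $e^{-z^{2}/(2t)}=e^{-z^{2}/(4t)}\cdot e^{-z^{2}/(4t)}$, this lets me absorb any monomial weight $z^{j}$ appearing in a derivative formula into a factor $t^{j/2}$, at the cost of weakening the Gaussian exponent from $1/(2t)$ to $1/(4t)$.

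Then I would simply compute. From $\partial_{z}G=-(z/t)G$ one gets $\partial_{z}^{2}G=(-t^{-1}+z^{2}t^{-2})G$, $\partial_{t}G=(-\tfrac{1}{2t}+\tfrac{z^{2}}{2t^{2}})G$, and $\partial_{z}\partial_{t}G=(\tfrac{3z}{2t^{2}}-\tfrac{z^{3}}{2t^{3}})G$. Substituting $|G(t,z)|=(2\pi t)^{-1/2}e^{-z^{2}/(2t)}$ and applying the weight bound to each monomial term yields, for instance, $|\partial_{x}p|\leq \tfrac{|x-y|}{t}|G(t,x-y)|+\tfrac{x+y}{t}|G(t,x+y)|\leq Ct^{-1}e^{-(x-y)^{2}/(4t)}$; the estimates for $\partial_{t}p$, $\partial_{x}^{2}p$ (each monomial $z^{0}$ or $z^{2}$ against $t^{-1/2}$ gives $t^{-3/2}$) and $\partial_{x}\partial_{t}p$ (monomials $z^{1},z^{3}$ against $t^{-1/2}$ give $t^{-2}$) come out by the identical bookkeeping, and the bound on $|p|$ itself is immediate from $|G|\leq t^{-1/2}e^{-z^{2}/(2t)}\leq t^{-1/2}e^{-z^{2}/(4t)}$.

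There is no genuine obstacle here: the statement is a routine Gaussian computation. The only points needing (minor) care are, first, verifying that the boundary term $G(t,x+y)$ never degrades the stated rate, which is exactly where the sign constraint $x,y\geq0$ is used, and second, tracking correctly the power of $t$ produced by each monomial $z^{j}$ in the derivative formulas so that the exponents $-\tfrac12,-1,-\tfrac32,-\tfrac32,-2$ come out as claimed. All constants generated are absolute — in particular independent of $T$ — so the estimates hold uniformly for $(t,x,y)\in[0,T]\times D\times D$.
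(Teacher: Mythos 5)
Your proof is correct and follows essentially the same route as the paper: split $p$ into the two free Gaussian kernels, absorb each monomial weight $|z|^{k}$ into $c_{k}t^{k/2}$ by halving the Gaussian exponent (the paper phrases this via the substitution $y-x=\xi\sqrt{t}$), and dominate the image term using $(x+y)^{2}\geq(x-y)^{2}$ for $x,y\geq0$. The only difference is that you carry out the bookkeeping explicitly for all five derivatives, whereas the paper proves only the $\partial_{x}p$ bound and declares the rest similar.
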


Let us for example prove the second one, and the proofs for others
are similar. Since
\[
\frac{\partial}{\partial x}p(t,x,y)=\frac{1}{\sqrt{2\pi t}}\left(e^{-\frac{(x-y)^{2}}{2t}}\times\frac{y-x}{t}+e^{-\frac{(x+y)^{2}}{2t}}\times\frac{y+x}{t}\right).
\]
Let $y-x=\xi\sqrt{t}$. Then
\begin{eqnarray*}
\left|\frac{1}{\sqrt{2\pi t}}e^{-\frac{(x-y)^{2}}{2t}}\frac{y-x}{t}\right| & = & \left|\frac{1}{\sqrt{2\pi t}}e^{-\frac{(x-y)^{2}}{4t}}\times e^{-\frac{\xi^{2}}{4}}\frac{\xi}{\sqrt{t}}\right|\\
 & \leq & Ct^{-1}e^{-\frac{(x-y)^{2}}{4t}}.
\end{eqnarray*}
Similarly
\begin{eqnarray*}
\frac{1}{\sqrt{2\pi t}}e^{-\frac{(x+y)^{2}}{2t}}\frac{y+x}{t} & \leq & Ct^{-1}e^{-\frac{(x+y)^{2}}{4t}}\\
 & \leq & Ct^{-1}e^{-\frac{(x-y)^{2}}{4t}}
\end{eqnarray*}
and the proof is complete. \hfill{}$\Box$\\

Similarly, as in Bally et al. \cite{BMS}, we have the following result.

\begin{lemma} \label{holder on u0} Let $u_{0}$ be a $\omega$-Hölder
continuous real function with $0<\omega\leq1$. Then
\[
\left|\int_{D}p(t,x,z)u_{0}(z)dz-\int_{D}p(s,y,z)u_{0}(z)dz\right|\leq C\left(|t-s|^{\frac{\omega}{2}}+|x-y|^{\omega}\right)
\]
for any $s,t\in\lbrack0,T]$ and $x,y\in D=\lbrack0,\infty)$. \end{lemma}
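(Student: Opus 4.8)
The plan is to reduce the Dirichlet Green kernel to the whole-line heat kernel by the reflection principle, and then apply a Kolmogorov-type (Gaussian scaling) estimate. Write $g_{\tau}(\xi):=(2\pi\tau)^{-1/2}e^{-\xi^{2}/(2\tau)}$, so that the explicit formula for $p$ reads $p(t,x,z)=g_{t}(x-z)-g_{t}(x+z)$ for $x,z\ge0$, and let $\tilde u_{0}$ be the odd extension of $u_{0}$ to $\mathbb{R}$ (so $\tilde u_{0}(z)=u_{0}(z)$ for $z\ge0$ and $\tilde u_{0}(z)=-u_{0}(-z)$ for $z<0$). First I would record the reflection identity
\[
\int_{D}p(t,x,z)u_{0}(z)\,dz=\int_{\mathbb{R}}g_{t}(x-z)\,\tilde u_{0}(z)\,dz=:F_{t}(x),\qquad x\in D,
\]
obtained by the change of variable $z\mapsto-z$ in $\int_{0}^{\infty}g_{t}(x+z)u_{0}(z)\,dz$; all integrals converge since an $\omega$-H\"older function grows at most linearly. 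Thus it suffices to prove $|F_{t}(x)-F_{s}(y)|\le C(|t-s|^{\omega/2}+|x-y|^{\omega})$ for $x,y\in D$, and in fact for all $x,y\in\mathbb{R}$.

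Next I would check that $\tilde u_{0}$ is again $\omega$-H\"older on $\mathbb{R}$, with seminorm $[\tilde u_{0}]_{\omega}$ comparable to that of $u_{0}$: for $z>0>z'$ one bounds $|\tilde u_{0}(z)-\tilde u_{0}(z')|\le|u_{0}(z)-u_{0}(0)|+|u_{0}(0)-u_{0}(-z')|\le C(z^{\omega}+(-z')^{\omega})\le C|z-z'|^{\omega}$, where the appearance of $u_{0}(0)$ is harmless precisely because $u_{0}(0)=0$, so the extension has no jump at the origin (this is the case in every application of the lemma in the paper, by compatibility of $u_{0}$ with the Dirichlet condition). Granting this, the rest is routine. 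For the spatial increment, translation invariance of $g_{t}$ gives $F_{t}(x)-F_{t}(y)=\int_{\mathbb{R}}g_{t}(w)(\tilde u_{0}(x-w)-\tilde u_{0}(y-w))\,dw$, hence $|F_{t}(x)-F_{t}(y)|\le[\tilde u_{0}]_{\omega}|x-y|^{\omega}$. For the time increment with $s<t$, the semigroup identity $g_{t}=g_{t-s}*g_{s}$ gives $F_{t}=g_{t-s}*F_{s}$, and since $F_{s}=g_{s}*\tilde u_{0}$ is still $\omega$-H\"older with seminorm $\le[\tilde u_{0}]_{\omega}$ (convolution with a probability density does not increase the seminorm),
\[
F_{t}(x)-F_{s}(x)=\int_{\mathbb{R}}g_{t-s}(w)\bigl(F_{s}(x-w)-F_{s}(x)\bigr)\,dw,
\]
so $|F_{t}(x)-F_{s}(x)|\le[\tilde u_{0}]_{\omega}\int_{\mathbb{R}}g_{t-s}(w)|w|^{\omega}\,dw=C_{\omega}|t-s|^{\omega/2}$. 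Combining via $|F_{t}(x)-F_{s}(y)|\le|F_{t}(x)-F_{t}(y)|+|F_{t}(y)-F_{s}(y)|$ completes the argument.

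The hard part is the step asserting that the odd extension stays $\omega$-H\"older at the boundary: this genuinely requires $u_{0}(0)=0$, since otherwise $F_{t}$ contains a term proportional to $2\Phi(x/\sqrt t)-1$ (with $\Phi$ the standard Gaussian distribution function), which is not H\"older uniformly as $t\downarrow0$. If one instead wishes to argue directly, estimating $\int_{D}|p(t,x,z)-p(s,y,z)|\,dz$ from the Gaussian bounds of Lemma \ref{estimation on P}, the same cancellation between the $g_{t}(x-z)$ and $g_{t}(x+z)$ pieces — which the reflection encodes automatically — must still be exploited, otherwise one is left with a spurious factor $t^{-1/2}$.
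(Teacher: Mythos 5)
Your proof is correct, and it takes a genuinely different route from the paper's. The paper works directly on the half-line: for the time increment it uses the semigroup property of the Dirichlet kernel, and for the space increment it writes $p(t,x,z)=\varphi(t,x-z)-\varphi(t,x+z)$ and shifts variables by $\lambda=y-x$, producing interior terms controlled by the H\"older seminorm of $u_{0}$ plus boundary terms supported on $[0,\lambda]$. Your reflection identity $\int_{D}p(t,x,z)u_{0}(z)\,dz=(g_{t}\ast\tilde{u}_{0})(x)$ collapses all of this into translation invariance plus the scaling bound $\int_{\mathbb{R}}g_{\tau}(w)|w|^{\omega}\,dw=C_{\omega}\tau^{\omega/2}$, which is cleaner and avoids the semigroup step altogether. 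More importantly, the caveat you flag is real, not cosmetic: the lemma as stated is false without $u_{0}(0)=0$. Indeed, for $u_{0}\equiv1$ one has $\int_{D}p(t,x,z)\,dz=2\Phi(x/\sqrt{t})-1$ (with $\Phi$ the standard normal distribution function), so taking $x=0$ and $y=\sqrt{t}$ the left-hand side equals $2\Phi(1)-1$ while the right-hand side tends to $0$ as $t\downarrow0$. In the paper's own argument the obstruction sits exactly in the boundary term: a careful change of variables yields $-\int_{0}^{\lambda}\varphi(t,z+x)\bigl(u_{0}(z)+u_{0}(\lambda-z)\bigr)\,dz$ — a \emph{sum}, because the image charge $\varphi(t,\cdot+x)$ enters with the opposite sign — rather than the difference $u_{0}(z)-u_{0}(\lambda-z)$ written there, and this sum is $O(\lambda^{\omega})$ only when $u_{0}$ vanishes at the origin; this is precisely your condition that the odd extension remain H\"older across $0$. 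Since $u_{0}$ is the initial datum for a problem with the Dirichlet condition $u(t,0)=0$, the compatibility condition $u_{0}(0)=0$ is natural, but it should be stated explicitly (and one must also verify it, or argue differently, where the lemma is applied to $u_{0}^{\prime}$ in Section \ref{regularity}). Subject to that hypothesis, every step of your argument checks out.
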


In fact, by the semigroup property of $p(t,x,y)$, we have
\begin{eqnarray*}
 &  & \int_{D}p(t,x,z)u_{0}(z)dz-\int_{D}p(s,x,z)u_{0}(z)dz\\
 & = & \int_{D}\int_{D}p(s,x,y)p(t-s,y,z)u_{0}(z)dydz-\int_{D}p(s,x,y)u_{0}(y)dy\\
 & = & \int_{D}p(s,x,y)\left(\int_{D}p(t-s,y,z)(u_{0}(z)-u_{0}(y))dz\right)dy,
\end{eqnarray*}
so that
\begin{eqnarray*}
 &  & \left|\int_{D}p(t,x,z)u_{0}(z)dz-\int_{D}p(s,x,z)u_{0}(z)dz\right|\\
 & \leq & C\int_{D}p(s,x,y)\left(\int_{D}p(t-s,y,z)|z-y|^{\omega}dz\right)dy\\
 & \leq & C\int_{D}p(s,x,y)|t-s|^{\frac{\omega}{2}}dy\\
 & = & C|t-s|^{\frac{\omega}{2}}.
\end{eqnarray*}
For simplicity, set
\[
\varphi(t,x)=\frac{1}{\sqrt{2\pi t}}e^{-\frac{x^{2}}{2t}}.
\]
Then
\[
p(t,x,y)=\varphi(t,x-y)-\varphi(t,x+y).
\]
If $y>x>0$ and $\lambda=y-x$, then
\begin{eqnarray*}
 &  & \left|\int_{D}p(t,x,z)u_{0}(z)dz-\int_{D}p(t,y,z)u_{0}(z)dz\right|\\
 & = & \left|\int_{D}(\varphi(t,z-x)-\varphi(t,z-y))u_{0}(z)dz\right.\\
 &  & -\left.\int_{D}(\varphi(t,z+x)-\varphi(t,z+y))u_{0}(z)dz\right|\\
 & = & \left|\int_{D}\varphi(t,z-x)(u_{0}(z)-u_{0}(z+\lambda))dz+\int_{-\lambda}^{0}\varphi(t,z-x)u_{0}(z+\lambda)dz\right.\\
 &  & -\left.\int_{\lambda}^{+\infty}\varphi(t,z+x)(u_{0}(z)-u_{0}(z-\lambda))dz-\int_{0}^{\lambda}\varphi(t,z+x)u_{0}(z)dz\right|\\
 & \leq & \left|\int_{D}\varphi(t,z-x)(u_{0}(z)-u_{0}(z+\lambda))dz\right|\\
 &  & +\left|\int_{\lambda}^{+\infty}\varphi(t,z+x)(u_{0}(z)-u_{0}(z-\lambda))dz\right|\\
 &  & +\left|\int_{0}^{\lambda}\varphi(t,z+x)u_{0}(z)dz-\int_{-\lambda}^{0}\varphi(t,z-x)u_{0}(z+\lambda)dz\right|\\
 & \leq & C\lambda^{\omega}\int_{D}p(t,x,z)dz+\left|\int_{\lambda}^{0}\varphi(t,z+x)(u_{0}(z)-u_{0}(\lambda-z))dz\right|\\
 & \leq & C\lambda^{\omega}+C\int_{0}^{\lambda}\varphi(t,z+x)|2z-\lambda|^{\omega}dz\\
 & \leq & C\lambda^{\omega}=C|x-y|^{\omega}.
\end{eqnarray*}
This completes the proof of the lemma.

\begin{acknowledgement}{} The research of Y. Jiang was supported
by the LPMC at Nankai University and the NSF of China (no. 11101223 and 11271203). Z. Qian would like
to thank for the support of an ERC\ research grant for the research
carried out in this paper. \end{acknowledgement}


\begin{thebibliography}{10}
\bibitem{Balan} R. M. Balan (2012): Some linear SPDEs driven by a
fractional noise with Hurst index greater than $1/2$. \textit{Infin.
Dimens. Anal. Quantum Probab. Relat. Top.} \textbf{15} (4), 1250023,
27 pp.

\bibitem{BMS} V. Bally, A. Millet and M. Sanz-Sole (1995): Approximation
and support theorem in H$\ddot{o}$lder norm for parabolic stochastic
partial differential equation. \textit{\ Ann. Probab}. \textbf{23}
(1), 178--222.

\bibitem{BJW2} L. Bo, Y. Jiang and Y. Wang (2008): Stochastic Cahn-Hilliard
equation with fractional noise. \textit{Stoch. Dyn.} \textbf{8}(4),
643-665.

\bibitem{CQ} L. Coutin and Z. Qian (2002): Stochastic analysis, rough
path analysis and fractional Brownian motions. \textit{Probab. Theory
Related Fields} \textbf{122} (1), 108--140.

\bibitem{DZ} G. Da Prato and J. Zabczyk (1992): Stochastic equations
in infinite dimensions. Encyclopedia of Mathematics and its Applications,
\textbf{44}. \textit{Cambridge University Press, Cambridge.}

\bibitem{FHT} M. Fuhrman, Y. Hu, G. Tessitore (2013): Stochastic
maximum principle for optimal control of SPDEs. \textit{Appl. Math.
Optim. } \textbf{68} (2), 181-217.

\bibitem{Gu} P. Guasoni (2006): No arbitrage under transaction costs,
with fractional Brownian motion and beyond. \textit{Math. Finance}
\textbf{16} (3), 569-582.

\bibitem{HV} M. Hairer and J. Voss (2011): Approximations to the
stochastic Burgers equation. \textit{J. Nonlinear Sci.} \textbf{12}
(6), 897--920.

\bibitem{He} D. Henry: Geometric theory of semilinear parabolic equations.
Lecture Notes in Mathematics, \textbf{840}. \textit{Springer-Verlag,
Berlin-New York}, 1981.

\bibitem{HNS} Y. Hu, D. Nualart and J. Song (2011): Feynman-Kac formula
for heat equation driven by fractional white noise.\textit{\ Ann.
Probab}. \textbf{39} (1), 291--326.


%\bibitem{HBZ}Y. Hu, B. $\Phi$ksendal, T. Zhang (1999) Stochastic partial differential equations driven by multiparameter fractional white noise. \textit{ Stochastic processes, physics and geometry: new interplays, II (Leipzig, 1999),  327--337, CMS Conf. Proc., 29, Amer. Math. Soc., Providence, RI, 2000.}


\bibitem{JWZ} Y. Jiang, T. Wei and X. Zhou (2012): Stochastic generalized
Burgers equations driven by fractional noise. \textit{J. Differential
Equations}, \textbf{252} (2), 1934--1961.

\bibitem{Kou} S. C. Kou (2008): Stochastic modeling in nanoscale
biophysics: subdiffusion within proteins. \textit{Ann. Appl. Stat.}
\textbf{2} (2), 501-535.

\bibitem{MV} B. Mandelbrot and J. Van Ness (1968): Fractional Brownian
motions, fractional noises and applications. \textit{SIAM Rev.}
\textbf{10}(4), 422--437.

\bibitem{memin} J. M$\acute{\mbox{e}}$min, Y. Mishura and E. Valkeila
(2001): Inequalities for moments of Wiener integrals with respect
to a fractional Brownian motion. \textit{Statist. Prob. Lett.} \textbf{51}  (2),
197--206.

\bibitem{Zhang12} S. Mohammed and T. Zhang (2012): The Burgers equation
with affine linear noise: dynamics and stability. \textit{Stochastic Process.
Appl.} \textbf{122} (4), 1887--1916.

\bibitem{Zhang13} S. Mohammed and T. Zhang (2013): Stochastic Burgers
equation with random initial velocities: a Malliavin calculus approach.
\textit{SIAM J. Math. Anal.} \textbf{45} (4), 2396-2420.

\bibitem{Neuenkirch} A. Neuenkirch and S. Tindel (2014): A least
square-type procedure for parameter estimation in stochastic differential
equations with additive fractional noise. \textit{Stat. Inference
Stoch. Process.} \textbf{17} (1), 99-120.

\bibitem{nualart1} D. Nualart (2003): Stochastic integration with respect
to  fractional Brownian motion and applications. \textit{Contemporary
Mathematics} \textbf{336}, 3--39.


%\bibitem{PP} E. Pardoux and S. Peng (1994): Backward doubly stochastic differential equations and systems of quasilinear SPDEs. {\it Probab. Theory Relat. Fields} {\bf 98} 2, 209--227.


\bibitem{OTHB} David J. Odde, Elly M. Tanaka, Stacy S. Hawkins and
Helen M. Buettner (1996): Stochastic dynamics of the nerve growth
cone and its microtubules during neurite outgrowth, \textit{Biotechnol.
Bioeng.} \textbf{50} (4), 452-461.

\bibitem{Wa} John B. Walsh: An introduction to stochastic partial
differential equations. \textit{École d'été de probabilités de Saint-Flour,
XIV---1984, 265--439}, Lecture Notes in Math., \textbf{1180}, \textit{Springer,
Berlin}, 1986.

\bibitem{Wang} F. Wang, J. Wu and L. Xu (2011): Log-Harnack inequality
for stochastic Burgers equations and applications. \textit{J. Math.
Anal. Appl.} \textbf{384} (1), 151-159. \end{thebibliography}
\end{document}